  
  \documentclass[12pt,reqno]{amsart}
  \usepackage{latexsym} 
  \usepackage[all]{xy}
  \usepackage{amsfonts} 
  \usepackage{amsthm} 
  \usepackage{amsmath} 
  \usepackage{amssymb}
  \usepackage{pifont}  
  \usepackage{enumerate}
  \xyoption{2cell}
  \usepackage{amscd}

 \usepackage{tikz}
\usetikzlibrary{arrows}


  \def\<{{\langle}} 
  \def\>{{\rangle}}

  \def\note#1{{}}

  \def\note#1{} 

  \def\rhom#1#2#3{{{\rm Hom}\sb{#1}(#2,#3)}}

  \def\beq{\begin{equation}} 
  \def\eeq{\end{equation}}

  \def\ot{{\otimes}} 
   
  \def\Hom{\mbox{\rm Hom}\,}




 \def\gk{\mathrm{GKdim}}

  \newcounter{zlist} 
  \newenvironment{zlist}{\begin{list}{(\arabic{zlist})}{ 
  \usecounter{zlist}\leftmargin2.5em\labelwidth2em\labelsep0.5em 
  \topsep0.6ex
  \parsep0.3ex plus0.2ex minus0.1ex}}{\end{list}}

  \newcounter{blist} 
  \newenvironment{blist}{\begin{list}{(\alph{blist})}{ 
  \usecounter{blist}\leftmargin2.5em\labelwidth2em\labelsep0.5em 
  \topsep0.6ex 
  \parsep0.3ex plus0.2ex minus0.1ex}}{\end{list}} 

  \newcounter{rlist}


\def\stac#1{\raise-.2cm\hbox{$\stackrel{\displaystyle\otimes}{\scriptscriptstyle{#1}}$}}

\def\cten#1{\raise-.2cm\hbox{$\stackrel{\displaystyle\widehat{\otimes}}
{\scriptscriptstyle{#1}}$}}

\textheight 22.5 cm
\textwidth 15.5cm
\topmargin -.25in \headheight 0.3in \headsep .5cm
\oddsidemargin .15in \evensidemargin .15in
\topskip 12pt

  \def\Label#1{\label{#1}\ifmmode\llap{[#1] }\else 
  \marginpar{\smash{\hbox{\tiny [#1]}}}\fi} 
  \def\Label{\label}

  \newtheorem{proposition}{Proposition}[section]
  \newtheorem{lemma}[proposition]{Lemma} 
  \newtheorem{corollary}[proposition]{Corollary} 
  \newtheorem{theorem}[proposition]{Theorem} 

\theoremstyle{definition} 
  \newtheorem{definition}[proposition]{Definition}
    
  \newtheorem{example}[proposition]{Example}

  \theoremstyle{remark} 
  \newtheorem{remark}[proposition]{Remark}

  \newcounter{c} 
   
  \newcommand{\etyk}[1]{\vspace{-7.4mm}$$\begin{equation}\Label{#1} 
  \addtocounter{c}{1}} 
  \renewcommand{\]}{\ifnum \value{c}=1 $$\else \end{equation}\fi} 
  \setcounter{tocdepth}{2}







\def\ot{\otimes}

\def\FF{{\mathbb F}}

\def\NN{{\mathbb N}}

\def\ZZ{{\mathbb Z}}

\newcommand{\Cc}{\mathcal{C}}

\newcommand{\Vv}{\mathcal{V}}

\def\*C{{}^*\hspace*{-1pt}{\Cc}}

\def\text#1{{\rm {\rm #1}}}




 \def\v{\mathbf{r}}

 \def\1{\mathbf{1}}

\renewcommand{\Hom}{\mathrm{Hom}}
\numberwithin{equation}{section}
\def\v{\mathsf{v}}
 \newcommand{\volumer}{\v_R}
\newcommand{\volumes}{\v_S}
\newcommand{\topform}{\v}

\makeatletter
\newenvironment{proofof}[1]{\par
  \pushQED{\qed}%
  \normalfont \topsep6\p@\@plus6\p@\relax
  \trivlist
  \item[\hskip\labelsep
        \sl
 \indent   Proof of #1\@addpunct{.}]\ignorespaces
}{%
  \popQED\endtrivlist\@endpefalse
}
\makeatother

\begin{document}

\title{Differential smoothness of skew polynomial rings}

\author{Tomasz Brzezi\'nski}
 \address{ Department of Mathematics, Swansea University, 
  Swansea SA2 8PP, U.K.\ \newline 
\indent Department of Mathematics, University of Bia{\l}ystok, K.\ Cio{\l}kowskiego  1M,
15-245 Bia\-{\l}ys\-tok, Poland} 
  \email{T.Brzezinski@swansea.ac.uk}   
  
  \author{Christian Lomp}
 \address{Departamento de Matem\'atica, Faculdade de Ci\^encias, Universidade do Porto, 4169-007 Porto, Portugal}
 \email{clomp@fc.up.pt}
    \date{\today} 

\subjclass[2010]{16S38; 16S36, 58B34, 16E45} 
 \keywords{Differentially smooth algebra; skew polynomial ring}
 
\begin{abstract}
It is shown that, under some natural assumptions, the tensor product of differentially smooth algebras and the skew-polynomial rings over differentially smooth algebras are differentially smooth.
\end{abstract}
\maketitle

\section{Introduction}
The study of smoothness of algebras goes back at least to Grothendieck's EGA. The concept of a formally smooth commutative (topological) algebra introduced in there \cite[D\'efinition~19.3.1]{Gro:etu} was later extended to the non-commutative case by Schelter in \cite{Sch:smo}. An algebra is formally smooth if and only if the kernel of the multiplication map is projective as an bimodule. As argued by Schelter himself, this notion arose as a replacement of a far too general definition based on the finiteness of the global dimension. Although it plays an important role in non-commutative geometry (see e.g.\ \cite{CunQui:alg}, where such algebras are termed {\em quasi-free}), the notion of formal smoothness seems to be too restrictive. The too crude notion of smoothness based on the finiteness of the global dimensions was refined in \cite{StaZha:hom}, where a Noetherian algebra was said to be smooth provided that it had a finite global dimension equal to the homological dimension of all its simple modules. From the homological perspective probably most satisfying is the notion of {\em homological smoothness} introduced in \cite{Van:rel}: an algebra is homologically smooth provided it admits a finite resolution by finitely generated projective bimodules. Algebras of this kind display a Poincar\'e type duality between Hochschild homology and cohomology, and retain many properties characteristic of co-ordinate algebras of  smooth varieties (see e.g.\ \cite{Kra:Hoc}, where this last point is strongly argued for).

A different and more constructive approach to smoothness of algebras was taken in \cite{BrzSit:smo}. In this approach the smoothness of an algebra $A$ is related to the existence of a specific differential graded algebra (with $A$ as the degree-zero part) whose size is aligned with the rate of growth of $A$ measured by the Gelfand-Kirillov dimension, and which satisfies a strict version of the Poincar\'e duality in terms of an isomorphism with the corresponding complex of integral forms \cite{BrzElK:int} (see Section~\ref{sec.prelim} for precise definition). In view of this direct use of differential graded algebras this kind of smoothness is referred to as {\em differential smoothness}. The main advantage of this approach is its concreteness: a differentially smooth algebra comes equipped with a well-behaved differential structure and with the precisely defined concept of integration. Examples of differentially smooth algebras include the coordinate algebras of the quantum group $SU_q(2)$, the quantum 2-sphere (see \cite{BrzElK:int}), the non-commutative pillow algebra, the quantum cone algebras (see \cite{BrzSit:smo}), the quantum polynomial algebras (see \cite{KarLom:int}), and  Hopf algebra domains of Glefand-Kirillov dimension 2 that are not PI (see \cite{Brz:dif}). Although many of these examples are known to be also homologically smooth, the relationship between the differential and other types of smoothness is not clear yet. 

At the root of difficulties with comparing differential and other types of smoothness is the constructive nature of the former, which prevents one from using functorial or just existential arguments. In this paper we make a few steps toward resolving such difficulties and present two general constructions which lead from differentially smooth to differentially smooth algebras. First, we show that -- under some natural assumptions on differential structures and algebras -- the tensor product of differentially smooth algebras is differentially smooth. This allows one to deduce quickly smoothness of polynomial and Laurent polynomial rings without necessity of constructing specific differential structure (it suffices to have such a structure for polynomials in one variable). Second, again under some natural assumptions, we prove that the skew-polynomial rings over a smooth algebra are smooth.

\section{Preliminaries}\label{sec.prelim}
Let $\FF$ be a field. By a {\em differential calculus} over an $\FF$-algebra $R$ we mean a differential graded algebra $(\Omega R, d)$ (i.e.\ a graded algebra with the degree-one square-zero linear map $d:\Omega R \to \Omega R$ satisfying the graded Leibniz rule) such that:
\begin{blist}
\item $\Omega R = \bigoplus_{n\in \NN} \Omega^n R$, i.e.\ it  is non-negatively graded, and $\Omega^0R =R$,
\item For all $n\in \NN$, 
$$
\Omega^n R = R\underbrace{d(R)d(R)\cdots d(R)}_{\mbox{$n$-times}}.
$$
\end{blist}
The requirement (b) is called the {\em density condition}. 
A differential calculus  $(\Omega R, d)$  over $R$ is said to be {\em connected}, provided $\ker \left(d\mid_R\right) = \FF$. It is said to have {\em dimension $N$} or to be {\em $N$-dimensional} provided 
$$
\Omega^N R \neq 0 \qquad \mbox{and} \qquad  \Omega^nR =0 ,\quad \mbox{for all $n>N$}.
$$
An $N$-dimensional differential calculus  $(\Omega R, d)$ over $R$ is said to {\em admit a  volume form}, provided $\Omega^N R$ is isomorphic to $R$ as a both left and right $R$-module (but not necessarily as an $R$-bimodule). Any free generator $\v$ of $\Omega^N R$  as a right and left $R$-module is referred to as a {\em volume form}. Associated to a volume form $\v$ are two maps:
\begin{blist}
\item the right $R$-module {\em co-ordinate isomorphism}:
\begin{equation}\label{co-ord}
\pi_\v: \Omega^N R \to R, \qquad \pi_\v(\v r) =r;
\end{equation}
\item the {\em twisting} algebra automorphism:
\begin{equation}\label{twist.auto}
\theta_\v: R \to R, \qquad  r\mapsto \pi_\v(r\v).
\end{equation}
\end{blist}
Note that the admittance of a volume form does not automatically imply the existence of a volume form. To any right $R$-linear homomorphism $\varphi: \Omega^n R\to R$ we associate a family of right $R$-module maps
\begin{equation}\label{ell.gen}
\ell^k_\varphi :{\Omega^k R} \to  \rhom R {\Omega^{n-k}R} R, \quad \gamma \mapsto [\gamma'\mapsto \varphi(\gamma   \gamma')], \qquad k\in \{0,1,\ldots , n\}.
\end{equation}
The right $R$-multiplication on the space of right $R$-linear maps  $\rhom R {\Omega^{k}R} R$ is defined by $(\psi r)(\gamma) = \psi(r\gamma)$, for all $\psi \in \rhom R {\Omega^{k}R} R$, $r\in R$ and $\gamma\in \Omega^kR$. The maps $\ell^k_{\pi_\v}$ associated to a volume form co-ordinate isomorphism \eqref{co-ord} are $R$-bimodule homomorphisms, provided the left $R$-multiplication on $\rhom R {\Omega^{k}R} R$ is defined via the twisting automorphism $\theta_\v$, 
$$
(r \psi)(\gamma) = \theta_\v(r)\psi(\gamma).
$$
Following \cite{BrzSit:smo} a differential calculus with a volume $N$-form $\v$ is said to be {\em integrable} provided all bimodule homomorphisms $\ell^k_{\pi_\v}$, $k\in \{0,1,\ldots ,N\}$ are invertible. This is equivalent to the the existence of a complex of integrable forms \cite{BrzElK:int} isomorphic to $(\Omega R, d)$ (see \cite[Theorem~2.2]{BrzSit:smo}). To relieve the notation we will write $\ell^k_\v$ or simply $\ell^k$ for $\ell^k_{\pi_\v}$. For the future use we thus record that if $\v \in \Omega^NR$ is a volume form, then
\begin{equation}\label{ell}
\ell^k_\v  :{\Omega^k R} \to  \rhom R {\Omega^{N-k}R} R, \quad \gamma \mapsto [\gamma'\mapsto \pi_\v(\gamma   \gamma')], \qquad k\in \{0,1,\ldots , N\}.
\end{equation}

Given an affine $\FF$-algebra $R$ with generating subspace $\Vv$, let us write $\Vv(n)$ for the subspace of $R$ spanned by 1 and all words in generators of $R$ of length at most $n$.  The {\em Gelfand-Kirillov dimension} of $R$ is a real number defined as
\begin{equation}\label{GK}
\gk(A) := \inf \{ t\; |\; \dim \Vv(n) \leq n^t, \, n \gg 0\},
\end{equation}
if it exists, and is defined as infinity otherwise. The Gelfand-Kirllov dimension of an arbitrary $\FF$-algebra $R$ is by definition the supremum of the Gelfand-Kirillov dimensions of affine $\FF$-subalgebras of $R$ (see \cite[8.1.16]{McCRob:Noe}).
Although it is not generally true that the Gelfand-Kirillov dimension of the tensor product of two algebras $R$, $S$ is equal to the sum of their (finite) Gelfand-Kirillov dimensions, it is, however, the case that if $\gk(R) \leq 2$ or $\gk(S) \leq 2$, then
$$
\gk(R\ot S) = \gk (R) +\gk(S);
$$
see \cite[Proposition 3.12]{KraLen:gro}. We refer the reader to  \cite{KraLen:gro} or\cite[Chapter~8]{McCRob:Noe} for a detailed discussion of the Gelfand-Kirillov dimension, which, in the case of a commutative Noetherian algebra is a very good measure of geometric dimension of the underlying affine space. 

The version of smoothness studied in the present text is recalled in the following
\begin{definition}[\cite{BrzSit:smo}]
An affine algebra $R$ of integer Gelfand-Kirillov dimension $N$ is said to be {\em differentially smooth}, if there exists a connected, $N$-dimensional, integrable differential calculus on $R$.
\end{definition}

Let $R$ be an algebra and $\sigma$ an algebra automorphism of $R$. By a {\em skew-polynomial ring over $R$} we mean the algebra $R[z;\sigma]$ generated additionally by $z$ and the relations $zr = \sigma(r)z$, for all $r\in R$. Similarly the Laurent skew-polynomial ring $R[z^{\pm 1};\sigma]$ is defined. As was the case for tensor product algebras, it is not generally true that $\gk(R[z;\sigma]) = \gk(R) +1$ (see  \cite[Example 8.2.16]{McCRob:Noe}). The equality holds, whenever $\sigma$ is {\em locally algebraic}, i.e.\ if for all $r\in R$, the set $\{\sigma^n(r)\; |\; n\in \NN\}$ is contained in a finite dimensional subspace of $R$  (see \cite[Proposition~1]{LerMat:Gel}).

As we will often make statements that apply equally well to the skew-polynomial and the Laurent skew-polynomial rings, we reserve the symbol $R[z^\bullet;\sigma]$ to denote either $R[z;\sigma]$ or $R[z^{\pm 1};\sigma]$.

\section{Differential smoothness of the tensor product of algebras}\label{sec.tensor}
The aim of this section is to prove that, under some mild and geometrically natural assumptions, tensor product of integrable differential calculi on two algebras gives an integrable calculus on the tensor product algebra.

Suppose that $(\Omega R, d_R)$, where $\Omega R = \bigoplus_{k=0}^N \Omega^k R$, is an $N$-dimensional differential calculus on an $\FF$-algebra $R$, and that $(\Omega S, d_S)$, where 
 $\Omega S = \bigoplus_{k=0}^M \Omega^k S$, is an $M$-dimensional differential calculus on an $\FF$-algebra $S$. Consider $T:=R\otimes S$ and 
$$ 
\Omega T  := \Omega R \otimes \Omega S  =  \bigoplus_{n=0}^{N+M} \left( \underbrace{ \sum_{i=0}^k \Omega^i R \otimes \Omega^{k-i}S}_{=:\Omega^k T}\right).
$$
Components $\Omega^iR$, resp.\ $\Omega^jS$, are considered to be zero if $i$ or $j$ are not within their limits.
$\Omega T$ becomes a differential graded algebra, with graded multiplication defined as 
\begin{equation}\label{wedge.T} (\omega \otimes \nu)   (\omega' \otimes \nu) = (-1)^{|\nu||\omega'|} \omega  \omega' \otimes \nu  \nu',
\end{equation}
for homogeneous elements $\omega, \omega', \nu, \nu'$,
and extended differential $d_T$ of $\Omega T$ defined by
\begin{equation}\label{dT}
 d_T(\omega \otimes \nu) := \omega \otimes d_S(\nu) \: + (-1)^i d_R(\omega)\otimes \nu, 
 \end{equation}
for all $\omega \in \Omega^iR$ and  $\nu\in\Omega^jS$.
By the density condition
$$
\omega = \sum_t r_0^td_R(r_1^t) \cdots   d_R(r_i^t), \qquad \nu = \sum_u s_0^ud_S(s_1^u)  \cdots   d_S(s_j^u),
$$
hence, in view of \eqref{wedge.T} and \eqref{dT},
$$
\omega \ot \nu  = \sum_{t,u} (r_0^t\ot s_0^u)d_T(r_1^t\ot 1)  \cdots   d_T(r_i^t\ot 1)   d_T(1\ot s_1^u)  \cdots   d_T(1\ot s_j^u).
$$
Therefore,  the differential graded algebra $(\Omega T, d_T)$ is a differential calculus on $T$.

\begin{proposition}\label{prop.tensor}
Let $R$ and $S$ be algebras with integrable differential calculi $(\Omega R,d_R)$ and $(\Omega S, d_S)$. Suppose that $\Omega R$ is a finitely generated projective right $R$-module and that  $\Omega S$ is a finitely generated projective right $S$-module. Then $(\Omega R \otimes \Omega S, d)$ is an integrable differential calculus for $R\otimes S$. 
\end{proposition}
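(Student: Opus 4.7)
The plan is to produce an explicit volume form on $(\Omega T, d_T)$ from the volume forms $\volumer$, $\volumes$ of $R$ and $S$, and then check that each $\ell^k$ decomposes, up to sign, as a direct sum of tensor products of the maps $\ell^i_{\volumer}$ and $\ell^j_{\volumes}$ under a natural Hom-identification afforded by finite projectivity.

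First, I would set $\topform := \volumer \otimes \volumes \in \Omega^{N+M} T$, noting that $\Omega^{N+M}T = \Omega^N R \otimes \Omega^M S$ since every other bidegree of total degree $N+M$ vanishes. Because $\volumer$ freely generates $\Omega^N R$ on both sides and $\volumes$ freely generates $\Omega^M S$ on both sides, $\topform$ is a volume form on $\Omega T$, the coordinate isomorphism satisfies $\pi_\topform(\omega \otimes \nu) = \pi_{\volumer}(\omega)\otimes \pi_{\volumes}(\nu)$ for $\omega\in\Omega^N R$, $\nu\in\Omega^M S$, and the twisting automorphism factorises as $\theta_\topform = \theta_{\volumer}\otimes\theta_{\volumes}$.

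Next, I would exploit the bigraded structure. Since the product \eqref{wedge.T} annihilates any bigraded component $\Omega^p R \otimes \Omega^q S$ with $p>N$ or $q>M$, the pairing $(\omega\otimes\nu,\omega'\otimes\nu')\mapsto \pi_\topform((\omega\otimes\nu)(\omega'\otimes\nu'))$ is non-zero only when $|\omega|+|\omega'|=N$ and $|\nu|+|\nu'|=M$. Consequently $\ell^k_\topform$ splits as a direct sum, indexed by $i+j=k$, of maps
\[
\ell^{i,j}:\Omega^i R\otimes\Omega^j S \;\longrightarrow\; \Hom_T\!\left(\Omega^{N-i}R\otimes\Omega^{M-j}S,\,T\right).
\]
A direct computation using \eqref{wedge.T} gives
\[
\ell^{i,j}(\omega\otimes\nu)(\omega'\otimes\nu') = (-1)^{j(N-i)}\,\pi_{\volumer}(\omega\omega')\otimes \pi_{\volumes}(\nu\nu'),
\]
so that $\ell^{i,j}$ equals $(-1)^{j(N-i)}$ times the tensor product $\ell^i_{\volumer}\otimes \ell^j_{\volumes}$, once we identify the target with $\Hom_R(\Omega^{N-i}R,R)\otimes\Hom_S(\Omega^{M-j}S,S)$.

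The final step is to justify that identification. Since $\Omega R$ is finitely generated and projective as a right $R$-module, each graded component $\Omega^i R$ is a direct summand of $\Omega R$ (the right action being degree-preserving), hence is itself finitely generated projective over $R$; the analogous statement holds for $\Omega^j S$ over $S$. For finitely generated projective right modules $P_R$ and $Q_S$ the canonical map $\Hom_R(P,R)\otimes\Hom_S(Q,S)\to\Hom_{R\otimes S}(P\otimes Q,\,R\otimes S)$ is an isomorphism (it is an isomorphism in the free rank-one case and both sides are additive in $P$ and $Q$). Under this identification each $\ell^{i,j}$ becomes the invertible map $\pm(\ell^i_{\volumer}\otimes\ell^j_{\volumes})$, whose inverse is $\pm((\ell^i_{\volumer})^{-1}\otimes(\ell^j_{\volumes})^{-1})$ by the integrability of the two calculi. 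Taking the direct sum over $i+j=k$ yields invertibility of $\ell^k_\topform$, which is exactly integrability of $(\Omega T, d_T)$. The principal bookkeeping obstacle is keeping track of the Koszul signs in \eqref{wedge.T} and verifying that they enter $\ell^{i,j}$ only as a global invertible scalar on each bigraded component, which is what the computation above confirms.
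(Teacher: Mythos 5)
Your proposal is correct and follows essentially the same route as the paper: form the volume form $\volumer\otimes\volumes$, observe that the top-degree pairing only couples complementary bidegrees so that $\ell^k$ decomposes as $\bigoplus_{i+j=k}\ell^i_{\volumer}\otimes\ell^j_{\volumes}$, and use finite generation and projectivity of the graded components to identify $\Hom_R(\Omega^{N-i}R,R)\otimes\Hom_S(\Omega^{M-j}S,S)$ with $\Hom_T(\Omega^{N-i}R\otimes\Omega^{M-j}S,T)$. Your bookkeeping of the Koszul sign $(-1)^{j(N-i)}$ is in fact slightly more careful than the paper's displayed formula, which suppresses it; as you note, it is a unit scalar on each bigraded component and does not affect invertibility.
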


\begin{proof}
We write $T:= R\ot S$ and assume that $(\Omega^R,d_R)$ has dimension $N$ and $(\Omega^S,d_S)$ has dimension $M$. Note that for  homogeneous $\omega \otimes \nu \in \Omega^iR\otimes \Omega^{k-i}S$ and $\omega' \otimes \nu' \in \Omega^jR\otimes \Omega^{k'-j}S$ we have
$$ (\omega \otimes \nu)   (\omega' \otimes \nu') = (-1)^{(k-i)j}\underbrace{\omega\,   \omega'}_{\in \Omega^{i+j}R} \otimes \underbrace{\nu\,   \nu'}_{\in \Omega^{k+k'-i-j}S} \in \Omega^{k+k'}T.$$

Since $\Omega^{i+j}R= 0$ for $i+j>N$ and $\Omega^{N+M-i-j}S=0$ for $i+j<N$, we have for all $k$:
$$
\left( \Omega^iR\otimes \Omega^{k-i}S\right)   \left(\Omega^jR \otimes \Omega^{N+M-k-j}S\right) = 0, \qquad \: \forall j\neq N-i.
$$
This means in particular 
\begin{equation} \label{components} 
\left( \Omega^iR\otimes \Omega^{k-i}S\right)   \Omega^{N+M-k}T = \left( \Omega^iR\otimes \Omega^{k-i}S\right)   \left( \Omega^{N-i}R \otimes \Omega^{M-(k-i)}S\right),
\end{equation}
for all $i\leq k$.

Since $\Omega^{N+M}T=\Omega^NR   \otimes    \Omega^MS$,  for all $\varphi_1 \in \Hom_R(\Omega^{N}R, R)$ 
and $\varphi_2 \in \Hom_S(\Omega^{M}S, S)$,   $\varphi := \varphi_1 {\otimes} \varphi_2 \in \Hom_T(\Omega^{N+M}T, T)$, and so we can
consider the maps \eqref{ell.gen},
$ \ell^k_\varphi  : \Omega^kT \rightarrow \Hom_T(\Omega^{N+M-k}T, T)$. 
By equation (\ref{components}), 
$$
\ell^k_\varphi\left( {\Omega^iR \otimes \Omega^{k-i}S}\right)\subseteq \Hom_T({\Omega^{N-i}R \otimes \Omega^{M-(k-i)}S}, T).
$$
 In particular
$$
\ell_\varphi^k (\omega \otimes \nu)(\omega' \otimes \nu') = 
\varphi_1(\omega  \omega') \otimes \varphi_2(\nu\otimes \nu') = \ell^i_{\varphi_1}(\omega)(\omega') \otimes \ell^{k-i}_{\varphi_2}(\nu)(\nu'),
$$
for all $ \omega\in \Omega^iR,  \omega'\in \Omega^{N-i}R$,  $\nu\in \Omega^{k-i}S, \nu'\in\Omega^{M-k+i}S$, 
where $\ell^i_{\varphi_1}$ and $\ell^{k-i}_{\varphi_2}$ are defined by \eqref{ell.gen}.
Identifying  $\ell^i_{\varphi_1}\otimes \ell^{k-i}_{\varphi_2} \in \Hom_R(\Omega^iR, R) \otimes \Hom_S(\Omega^{k-i}S,S)$ with an element of $\Hom_T(\Omega^iR\otimes \Omega^{k-i}S, T)$,  we obtain $\ell^k_\varphi = \sum_{i=0}^k \ell^i_{\varphi_1} {\otimes} \ell^{k-i}_{\varphi_2},$ since $\Omega^kT = \bigoplus_{i=0}^k \Omega^iR \otimes \Omega^{k-i}S$.

Suppose that $\volumer \in \Omega^NR$ and  $\volumes \in \Omega^MS$ are volume forms with corresponding co-ordinate isomorphisms $\pi_{\volumer}:\Omega^NR \rightarrow R$ and $\pi_{\volumes}:\Omega^NS \rightarrow S$. Then $\v =\volumer \otimes \volumes$ is a volume form for $\Omega^{N+M}T$ with isomorphism $\pi_\v = \pi_{\volumer} \otimes \pi_{\volumes}$. We have already seen that $\ell^k_{\v} = \sum_{i=0}^k  \ell^i_{\volumer} \otimes \ell^{k-i}_{\volumes}$, for all $0<k<N+M$.

By assumption the maps $\ell^i_{\volumer}$ and $\ell^j_{\volumes}$ are bijective for all $0\leq i,j \leq k$. Hence also 
$$
\ell^i_{\volumer}\otimes \ell^{k-i}_{\volumes}: \Omega^iR \otimes \Omega^{k-i}S \longrightarrow \Hom_R(\Omega^{n-i}R,R)\otimes  \Hom_S(\Omega^{m-(k-i)}S,S),
$$
is bijective for all $0\leq i \leq k$.

If $\Omega^{N-i}R$ and $\Omega^{M-(k-i)}S$ are finitely generated projective as right $R$-modules, respectively as right $S$-modules, then by \cite[15.9]{Wis:mod},
$$
\Hom_R(\Omega^{N-i}R,R)\otimes  \Hom_S(\Omega^{M-(k-i)}S,S) =\Hom_T( \Omega^{N-i}R \otimes  \Omega^{M-(k-i)}S, T),
$$
and the maps $\ell^i_{\volumer}\otimes \ell^{k-i}_{\volumes}$ between $\Omega^iR \otimes \Omega^{k-i}S$ and $\Hom_T( \Omega^{N-i}R \otimes  \Omega^{M-(k-i)}S, T)$ are bijections.
Thus
$$ \ell^k_\v:\underbrace{\bigoplus_{i=0}^k \Omega^iR \otimes \Omega^{k-i}S}_{\Omega^kT} \xrightarrow{\sum \ell^i_{\volumer} \otimes \ell^{k-i}_{\volumes}} 
\underbrace{\bigoplus_{i=0}^k  \Hom_R\left(\Omega^{N-i}R,R\right)\otimes  \Hom_S\left(\Omega^{M-(k-i)}S,S\right)}_{  
\Hom_T\left(\bigoplus_{i=0}^k  \Omega^{N-i}R \otimes \Omega^{M-(k-i)}S, T\right)}$$
is a bijection.
Since 
$$ \bigoplus_{i=0}^k  \Omega^{N-i}R \otimes \Omega^{M-(k-i)}S
= \bigoplus_{j=0}^{N-k}  \Omega^jR \otimes \Omega^{N+M-k-j}S
= \Omega^{N+M-k}T,$$
where components of  $\Omega R$ or $\Omega S$ are zero if their degrees are not within the limits, we eventually conclude that  $\ell^k_\pi$ is a bijection between $\Omega^kT$ and $\Hom_T(\Omega^{N+M-k}T,T)$.
\end{proof}

Proposition~\ref{prop.tensor} yields
\begin{corollary}\label{cor.tensor}
If $R$ and $S$ are differentially smooth algebras with respect to calculi which are finitely generated projective as right modules and 
$$
\gk(R\ot S) = \gk(R)\ + \gk(S),
$$
then the tensor product algebra $R\ot S$ is differentially smooth.
\end{corollary}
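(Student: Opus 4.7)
The plan is to reduce the statement to Proposition~\ref{prop.tensor} plus a check of the remaining axioms in the definition of differential smoothness. Set $N = \gk(R)$ and $M = \gk(S)$. Since $R$ and $S$ are differentially smooth, each carries a connected, integrable calculus of its respective Gelfand--Kirillov dimension, and by hypothesis both calculi are finitely generated projective as right modules. Proposition~\ref{prop.tensor} then supplies an integrable $(N+M)$-dimensional calculus $(\Omega T, d_T)$ on $T = R \ot S$. Affineness of $T$ is immediate from affineness of $R$ and $S$, integrality of the Gelfand--Kirillov dimension of $T$ follows from integrality of $N$ and $M$, and by the additivity hypothesis $\gk(R\ot S) = N+M$, so that the dimension of $(\Omega T, d_T)$ matches $\gk(T)$.

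The one axiom not handled by Proposition~\ref{prop.tensor} is connectedness, namely $\ker(d_T|_T) = \FF$. I would verify it as follows. Decompose $R = \FF \oplus R_0$ and $S = \FF \oplus S_0$ as vector spaces; connectedness of each factor ensures that $d_R$ is injective on $R_0$ and $d_S$ is injective on $S_0$. Because $\Omega T = \Omega R \ot \Omega S$ is an honest graded vector-space tensor product, one has the direct-sum decomposition $\Omega^1 T = (R \ot \Omega^1 S) \oplus (\Omega^1 R \ot S)$. The formula $d_T(r \ot s) = r \ot d_S(s) + d_R(r) \ot s$ combined with projection onto these two summands forces any $t \in \ker(d_T|_T)$ to satisfy $(\id_R \ot d_S)(t) = 0$ and $(d_R \ot \id_S)(t) = 0$ separately. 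Expanding $t$ according to $R \ot S = \FF \oplus R_0 \oplus S_0 \oplus (R_0 \ot S_0)$ and invoking the injectivity of $d_R|_{R_0}$ and $d_S|_{S_0}$ pins $t$ down to $\FF \cdot (1 \ot 1)$.

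The substantive work has already been carried out in Proposition~\ref{prop.tensor}, so no serious obstacle remains here; the only mild subtlety is the connectedness argument, which depends on $\Omega T$ being the plain vector-space tensor product so that the two summands of $\Omega^1 T$ genuinely decouple under projection.
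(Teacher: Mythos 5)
Your proposal is correct and follows essentially the same route as the paper: invoke Proposition~\ref{prop.tensor} for integrability and the dimension count, then verify connectedness of $\Omega T$ by using the direct-sum decomposition $\Omega^1 T = (R \ot \Omega^1 S) \oplus (\Omega^1 R \ot S)$ to split $d_T(t)=0$ into the two conditions $(\id_R\ot d_S)(t)=0$ and $(d_R\ot\id_S)(t)=0$. The paper phrases this by expanding $t$ in linearly independent families $\{r_i\}$, $\{s_j\}$ rather than in chosen complements $R_0$, $S_0$, but the argument is the same.
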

\begin{proof} We only need to check whether the connectedness of $\Omega R$ and $\Omega S$ implies the connectedness of $\Omega T$. Let $x= \sum_{i,j} \alpha_{ij}r_i\ot s_j \in \ker d_T \subseteq R\ot S$, where the sets $\{r_i\}$ and $\{s_j\}$ are linearly independent and $\alpha_{ij}\in \FF$. If $\Omega R$ is connected, then in view of the definition \eqref{dT}, $\sum_i \alpha_{ij}r_i$ is a scalar multiple of 1, for all $j$, i.e.\ $x = \sum_j 1\ot \beta_j s_j$, for some scalars $\beta_j$. If, furthermore $\Omega S$ is connected, then  the definition \eqref{dT} implies that  $\sum_j \beta_js_j$ is a scalar multiple of $1$, hence $x$ is a scalar multiple of $1\otimes 1$.
Therefore, $\Omega T$ is connected. The assertion then follows by Proposition~\ref{prop.tensor}.
\end{proof}

\begin{corollary}\label{cor.poly}
Let $R$ be a differentially smooth algebra with respect to a differential calculus $\Omega R$ that is finitely generated and projective over $R$. Then extensions of the form $R[x_1, \ldots, x_n, y_1^{\pm 1}, \ldots, y_m^{\pm 1}]$ are also  differentially smooth.
\end{corollary}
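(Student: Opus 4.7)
The strategy is to exhibit the extension algebra as an iterated tensor product
\[
R[x_1,\ldots,x_n,y_1^{\pm 1},\ldots,y_m^{\pm 1}] \;\cong\; R \ot \FF[x_1] \ot \cdots \ot \FF[x_n] \ot \FF[y_1^{\pm 1}] \ot \cdots \ot \FF[y_m^{\pm 1}],
\]
which is legitimate because all the adjoined generators commute with each other and with $R$, and then to apply Corollary~\ref{cor.tensor} by induction on $n+m$, tensoring one factor at a time.

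First I would verify that the one-variable building blocks $\FF[x]$ and $\FF[y^{\pm 1}]$ are themselves differentially smooth with a calculus that is finitely generated projective as a right module. For $\FF[x]$ take the standard calculus $\Omega\FF[x] = \FF[x]\oplus \FF[x]\,dx$ with $d(x^k) = kx^{k-1}\,dx$; it is one-dimensional, connected, and free of rank two (hence finitely generated projective) as a right $\FF[x]$-module. Integrability is immediate from \eqref{ell} with volume form $\v=dx$: upon the evident identifications $\Hom_{\FF[x]}(\FF[x]\,dx,\FF[x])\cong\FF[x]$ and $\Hom_{\FF[x]}(\FF[x],\FF[x])\cong\FF[x]$, both maps $\ell^0_\v$ and $\ell^1_\v$ become the identity on $\FF[x]$. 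Since $\gk(\FF[x])=1$, this establishes differential smoothness. The same recipe, using $dy$, works for $\FF[y^{\pm 1}]$.

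Now set $T_0:=R$ and inductively define $T_{i+1}$ to be $T_i$ tensored with the next building block, either some $\FF[x_k]$ or some $\FF[y_k^{\pm 1}]$. At every step the new tensor factor has Gelfand-Kirillov dimension $1$, so the hypothesis ``$\gk(S)\leq 2$'' in \cite[Proposition~3.12]{KraLen:gro} is automatically satisfied and yields the additivity $\gk(T_{i+1}) = \gk(T_i) + 1$. Both $T_i$ (by the inductive hypothesis) and the new factor are differentially smooth with calculi finitely generated projective as right modules, so Corollary~\ref{cor.tensor} applies and makes $T_{i+1}$ differentially smooth. The tensor-product calculus constructed in Proposition~\ref{prop.tensor} is again finitely generated projective as a right $T_{i+1}$-module: each graded component $\bigoplus_i \Omega^i T_i \ot \Omega^{k-i}\FF[x_{k+1}]$ is a finite direct sum of tensor products of finitely generated projective right modules over the corresponding factors, hence finitely generated projective over the tensor product algebra. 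After $n+m$ iterations the conclusion follows.

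The only potentially delicate point is the additivity of the Gelfand-Kirillov dimension, which fails for tensor products of arbitrary algebras; it is disarmed here by the choice to tensor with a \emph{single-variable} factor at each stage, so that the dimension-one hypothesis of \cite[Proposition~3.12]{KraLen:gro} holds throughout the induction. No other genuine obstacle arises.
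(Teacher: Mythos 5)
Your overall architecture is exactly the paper's: rewrite the extension as $R\ot\FF[x_1]\ot\cdots\ot\FF[y_m^{\pm1}]$, check the one-variable building blocks, and iterate Corollary~\ref{cor.tensor}, using $\gk(\FF[x])=\gk(\FF[y^{\pm1}])=1\leq 2$ to invoke the additivity of the Gelfand--Kirillov dimension from \cite[Proposition 3.12]{KraLen:gro} at each stage. Your observation that the tensor-product calculus of Proposition~\ref{prop.tensor} stays finitely generated projective as a right module (so that the induction can continue) is a point the paper leaves implicit, and it is correct: a tensor product of finitely generated projective right modules is finitely generated projective over the tensor product algebra.

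There is, however, one step that fails as written. You take the \emph{undeformed} commutative calculus on $\FF[x]$ with $d(x^k)=kx^{k-1}\,dx$ and assert it is connected. Connectedness means $\ker(d|_{\FF[x]})=\FF$, and this is false whenever $\FF$ has positive characteristic $p$: then $d(x^p)=px^{p-1}\,dx=0$, so $x^p\in\ker(d|_{\FF[x]})\setminus\FF$. Since the paper works over an arbitrary field, your verification of the base case is incomplete. The repair is the one the paper uses: take instead the calculus with relation $x\,dx=q\,dx\,x$, i.e.\ with twisting automorphism $\theta_\v(f(x))=f(qx)$ for the volume form $\v=dx$; then $d(x^n)=(1+q+\cdots+q^{n-1})\,dx\,x^{n-1}$, and choosing $q$ not a root of unity makes the calculus connected while leaving your integrability and projectivity arguments untouched. (For $\FF[y^{\pm1}]$ the analogous care is needed; the paper's choice $\v=y^{-1}\,dy$ with trivial twist has the same characteristic-$p$ caveat for the kernel of $d$, so again a deformation parameter may be required.) With that substitution your proof goes through and coincides with the paper's.
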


\begin{proof}
Both the polynomial algebra $\FF[x]$ and the Laurent polynomial algebra $\FF[y^{\pm 1}]$ have Gelfand-Kirillov dimension one and are smooth. In the case of $\FF[x]$ a connected one-dimensional integrable differential calculus is (freely as a module) generated  by the volume one-form $\v = d(x)$ and the associated twisting automorphism  $\theta_\v(f(x)) = f(qx)$, where $q$ is any non-zero element of $\FF$ (this determines fully the structure of $\Omega\FF[x] = \FF[x] \oplus \Omega^1\FF[x]$). In the case of $\FF[y^{\pm 1}]$, the volume form can be chosen as $\v = y^{-1}d(y)$ and then $\theta_v$ is the identity map. Since 
$$
R[x_1, \ldots, x_n, y_1^{\pm 1}, \ldots, y_m^{\pm 1}] = R\ot \FF[x_1]\ot \ldots\ot \FF[x_n]\ot \FF[y_1^{\pm 1}]\ot \ldots\ot \FF[y_m^{\pm 1}],
$$
and $\gk\left( \FF[x_i]\right) = \gk\left(\FF[y_j^{\pm 1}]\right) = 1 \leq 2$, the assertion follows by a repeated use of  Corollary~\ref{cor.tensor} and \cite[Proposition 3.12]{KraLen:gro}.
\end{proof}

\section{Differential smoothness of skew-polynomial rings}\label{sec.skew}
The aim of this section is to prove the following
\begin{theorem}\label{thm.skew}
Let $R$ be a  an algebra with an integrable differential calculus $(\Omega R,d)$ such that $\Omega R$ is a finitely generated right $R$-module. 
For any automorphism $\sigma$ of $R$ that extends to a degree-preserving automorphism of $\Omega R$, which commutes with $d$, there exists an integrable differential calculus $(\Omega A, d)$ on
the skew-polynomial ring $A=R[z;\sigma]$ and the Laurent skew-polynomial ring $A=R[z^{\pm 1};\sigma]$.
If $R$ is differentially smooth with respect to $(\Omega R,d)$ and $\gk(A)=\gk(R)+1$, then $A$ is also differentially smooth.
\end{theorem}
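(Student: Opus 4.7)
The plan is to parallel the tensor product argument of Proposition~\ref{prop.tensor}, but with every interaction between $\Omega R$ and the new variable $z$ twisted by $\tilde\sigma$, the given degree-preserving extension of $\sigma$ to $\Omega R$. First I would construct $\Omega A$ explicitly. As a graded vector space it is $\Omega R\otimes\Omega\FF[z^\bullet]$, where $\Omega\FF[z^\bullet]$ is the standard one-dimensional calculus; the product is twisted by imposing $z\omega=\tilde\sigma(\omega)z$ for $\omega\in\Omega R$ (extending the defining skew relation of $A$) together with the graded form-level companion $dz\wedge\omega=(-1)^{|\omega|}\tilde\sigma(\omega)\wedge dz$, which is forced as soon as one demands that $d_A$ be a graded derivation compatible with $zr=\sigma(r)z$. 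Taking $d_A$ to extend $d_R$ with $d_A(z)=dz$ (and $d_A(z^{-1})=-z^{-2}dz$ in the Laurent case), the hypothesis $d\tilde\sigma=\tilde\sigma d$ is exactly what is needed to make $d_A$ a well-defined square-zero graded derivation. Density is immediate from density for $\Omega R$ and the fact that $A$ is generated by $R$ and $z^{\pm 1}$, and connectedness follows by separating powers of $z$ in $\ker d_A|_A$.

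Next I would set up the volume form using the graded decomposition
\[
\Omega^k A \;=\; (\Omega^k R\otimes_\FF\FF[z^\bullet])\;\oplus\;(\Omega^{k-1}R\otimes_\FF\FF[z^\bullet]dz),
\]
which gives $\Omega^{N+1}A=\Omega^N R\otimes_\FF \FF[z^\bullet]dz$ in top degree. I would propose $\v_A:=\v_R\wedge dz$ for $A=R[z;\sigma]$ and $\v_A:=\v_R\wedge z^{-1}dz$ in the Laurent case, and verify that $\Omega^{N+1}A$ is free of rank one as both a left and right $A$-module with generator $\v_A$, using that $\tilde\sigma$ preserves and is invertible on $\Omega^N R=\v_R R$ together with freeness of $A$ over $R$. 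The twisting automorphism $\theta_{\v_A}$ is then determined on generators: $\theta_{\v_A}|_R=\tilde\sigma^{-1}\circ\theta_{\v_R}$, with a tractable formula for $\theta_{\v_A}(z)$ involving the element $c\in R$ defined by $\tilde\sigma(\v_R)=\v_R c$.

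The hard part will be integrability, i.e.\ bijectivity of each $\ell^k_{\v_A}:\Omega^k A\to\Hom_A(\Omega^{N+1-k}A,A)$. The decomposition above decomposes both source and target, and $\ell^k_{\v_A}$ couples the $\Omega^i R$-summand of $\Omega^k A$ only with the $\Omega^{N-i}R$-summand of $\Omega^{N+1-k}A$, since all other pairings produce forms of degree strictly less than $N+1$. For each such pair I would invoke finite generation of $\Omega R$ as a right $R$-module together with freeness of $A$ as a left $R$-module to establish an isomorphism
\[
\Hom_R(\Omega^j R,R)\otimes_R A \;\cong\; \Hom_A(\Omega^j R\otimes_R A,\,A),
\]
converting each component of $\ell^k_{\v_A}$ into one of the form $\ell^j_{\v_R}\otimes\id_A$, which is bijective by integrability of $(\Omega R,d)$. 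The principal obstacle is bookkeeping: carefully tracking the $\tilde\sigma$-twists when passing between the concrete summands of $\Omega^k A$ and the induced modules $\Omega^j R\otimes_R A$, and checking that these twists -- invertible by hypothesis on $\sigma$ -- can be absorbed cleanly without disrupting bijectivity.

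Finally, with a connected, integrable, $(N{+}1)$-dimensional differential calculus on $A$ in hand, and using $\gk(A)=\gk(R)+1=N+1$ from the hypothesis, the differential smoothness of $A$ follows immediately from the definition.
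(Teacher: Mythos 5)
Your proposal is correct and follows essentially the same route as the paper: the twisted tensor product $\Omega R\otimes\Omega\FF[z^\bullet]$ with relations $z\omega=\sigma(\omega)z$ and $dz\,\omega=(-1)^{|\omega|}\sigma(\omega)\,dz$ is precisely the trivial extension $\Omega R[z^\bullet;\sigma]\oplus\Omega R[z^\bullet;\sigma]^{\overline{\sigma}}$ of Lemma~\ref{lem.extension}, the volume form $\v\, dz$ and its twisting automorphism are those of Lemma~\ref{lem.volume}, and your hom--tensor isomorphism for the finitely generated modules $\Omega^jR$ is exactly the content of Lemma~\ref{lem.morphisms} and Corollary~\ref{isomorphism}. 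The only cosmetic deviations are the choice $\v_R\wedge z^{-1}dz$ in the Laurent case (which differs from the paper's generator by a unit) and the packaging of the $\sigma$-twists, which the paper carries out explicitly via the isomorphisms $\Phi$, $\Phi'$, $\varphi^*$ in the commutative diagrams \eqref{diagram1} and \eqref{diagram2}.
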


Recall that the {\rm trivial extension} of an algebra $A$ by an $A$-bimodule $M$ is the algebra $B$ isomorphic to $A\oplus M$ as a vector space and with the multiplication
$$
(a,m)(a',m') = (aa', am' + ma'), \qquad \mbox{for all $a,a'\in A$, $m,m'\in M$}.
$$
If $\nu$ is an automorphism of an algebra $A$, then we will denote by $A^\nu$, the $A$-bimodule with the multiplication
$$
a\cdot b \cdot a' := ab\nu(a'), \qquad \mbox{for all $a,a',b\in A$}.
$$
Furthermore, we write $M[z]$ (respectively $M[z^{\pm 1}]$) for the direct sum of identical copies of a bimodule $M$ labelled by all natural numbers (resp.\ integers), with the elements of the summand corresponding to $n$ written as $mz^n$, $m\in M$. As was the case of skew-polynomial rings $M[z^\bullet]$ denotes either $M[z]$ or $M[z^{\pm 1}]$.

\begin{lemma} \label{lem.extension}
Let $(\Omega R,d)$ be an $N$-dimensional differential calculus  on an algebra $R$ and let  $\sigma$ be a  degree-preserving automorphism of $\Omega R$ that commutes with $d$. Denote also by $\sigma$ the restriction of $\sigma$ to $R$, and let $S=\Omega R[z^\bullet;\sigma]$ and $A=R[z^\bullet;\sigma]$ be the corresponding skew-polynomial rings. Define the automorphism $\overline{\sigma}$ of $S$ by
\begin{equation}\label{bar.sigma} 
\overline{\sigma}(\omega z^n) = (-1)^{|\omega|}\sigma(\omega) z^n,
\end{equation}
for all homogeneous $\omega \in \Omega R$ and integers $n$.
Then the trivial extension $\Omega A = S\oplus S^{\overline{\sigma}}$ is an $N+1$-dimensional differential calculus on $R[z^\bullet;\sigma]$ with differential 
\begin{equation}\label{d} 
d(\omega z^n , \nu z^m) =  \left(d(\omega)z^n , (-1)^{|\omega|} \omega \partial_z(z^n) + d(\nu)z^m \right)
\end{equation}
for all homogeneous $\omega, \nu \in \Omega R$, where $\partial_z$ denotes the (formal) derivative of polynomials. The grading on $\Omega A$ is given by
\begin{equation}\label{grading}
|(\omega z^n , 0)| = |\omega|, \qquad |(0 , \nu z^m)| = |\nu| +1,
\end{equation}
for all homogeneous  $\omega, \nu \in \Omega R$ with $\nu\neq 0$.
\end{lemma}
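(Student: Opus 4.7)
The plan is to verify the axioms of a differential calculus for $(\Omega A, d)$. Write $\iota(s)=(s,0)$ and $\jmath(t)=(0,t)$ for the two inclusions into the trivial extension $\Omega A = S\oplus S^{\overline{\sigma}}$. Under the multiplication
$$
(s,t)(s',t') = (ss',\, st' + t\,\overline{\sigma}(s')),
$$
the map $\iota$ is an algebra homomorphism and $\jmath(S)$ is a square-zero ideal, with $\iota(s)\jmath(t')=\jmath(st')$ and $\jmath(t)\iota(s')=\jmath(t\overline{\sigma}(s'))$. Since $\sigma$ preserves the grading of $\Omega R$, the automorphism $\overline{\sigma}$ preserves the grading of $S$, and together with \eqref{grading} this makes $\Omega A$ a non-negatively graded algebra with $\Omega^0 A = \iota(S^0)\cong A$ and $\Omega^k A = \iota(S^k)\oplus\jmath(S^{k-1})$ for $k\ge 0$ (with $S^{-1}=0$). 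The formula \eqref{d} is manifestly a degree-one $\FF$-linear map $d$, and to check $d^2 = 0$ one evaluates on each summand: on $\jmath(\nu z^m)$ one gets $\jmath(d^2(\nu)z^m)=0$, while on $\iota(\omega z^n)$ the two contributions to the $\jmath$-component arising from re-differentiation are $(-1)^{|\omega|+1}n\,d(\omega)z^{n-1}$ and $(-1)^{|\omega|}n\,d(\omega)z^{n-1}$, which cancel.

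The principal obstacle is the graded Leibniz rule, which splits into four cases depending on whether each factor lies in $\iota(S)$ or $\jmath(S)$. The $\jmath\cdot\jmath$ case is trivial as both sides vanish. The $\iota\cdot\jmath$ case reduces to the ordinary Leibniz rule for the differential $d_S(\omega z^n)=d(\omega)z^n$ on the skew-polynomial ring $S$; this $d_S$ is well-defined and satisfies Leibniz precisely because $d$ and $\sigma$ commute on $\Omega R$. The $\jmath\cdot\iota$ case rests on the identity $d_S\circ\overline{\sigma} = -\overline{\sigma}\circ d_S$, which is an immediate consequence of the sign $(-1)^{|\omega|}$ built into $\overline{\sigma}$ together with the fact that $d$ raises degree by one. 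The $\iota\cdot\iota$ case is the most delicate: setting $\delta(\omega z^n)=(-1)^{|\omega|}\omega\,\partial_z(z^n)$ for the cross-term from \eqref{d}, the Leibniz rule reduces to the twisted identity
$$
\delta(ss') = \delta(s)\,\overline{\sigma}(s') + (-1)^{|s|}\,s\,\delta(s'),
$$
which for monomials $s=\omega z^n$ and $s'=\omega' z^{n'}$ follows from the ordinary product rule for $\partial_z$ after collecting the common sign $(-1)^{|\omega|+|\omega'|}$.

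For the remaining axioms, note that \eqref{d} specialises to $d(r)=\iota(d_R(r))$ for $r\in R$ (since $\partial_z(z^0)=0$) and $d(z)=\jmath(1)$ (since $\partial_z(z)=1$). Any element of $\jmath(S)$ is then produced via $\iota(\nu z^l)\,d(z)=\jmath(\nu z^l)$, which reduces density for the $\jmath$-summand to density for the $\iota$-summand. For $\iota(\omega z^k)$ with $\omega = r_0 d_R(r_1)\cdots d_R(r_n)\in\Omega^n R$, the product $\iota(r_0 z^k)\,d(\sigma^{-k}(r_1))\cdots d(\sigma^{-k}(r_n))$ equals $\iota(\omega z^k)$, since $d$ commutes with $\sigma$ and the skew relation $z^k r=\sigma^k(r)z^k$ cancels each inverse $\sigma^{-k}$; invertibility of $\sigma$ handles the negative powers of $z$ required in the Laurent case without modification. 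Finally, $\Omega^{N+1}A = \iota(S^{N+1})\oplus\jmath(S^N) = \jmath(\Omega^N R[z^\bullet;\sigma])\neq 0$, while $\Omega^n A = 0$ for $n>N+1$, so $(\Omega A, d)$ has dimension exactly $N+1$.
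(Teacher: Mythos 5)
Your proposal is correct and follows essentially the same route as the paper: a direct verification of the differential graded algebra axioms on the trivial extension, with the graded Leibniz rule checked case by case on the two summands (your identities $d_S\circ\overline{\sigma}=-\overline{\sigma}\circ d_S$ and the twisted derivation property of $\delta$ are exactly what the paper's three explicit monomial computations establish). Your density argument, passing $z^k$ through the differentials via $\sigma^{-k}$, is in fact a welcome elaboration of a step the paper treats only briefly.
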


\begin{proof}
Using the fact that the differential map $d$ in $\Omega R$ raises degree of a form by one, one easily checks that the map defined in \eqref{d} is square-zero. 
Note that, by equation \eqref{d},
$$
d(z, 0) = (0, 1),
$$
hence the generator $(0,1)$ of the $S$-bimodule $S^{\overline\sigma}\subset \Omega A$  can be denoted by $dz$, and 
$(\omega z^n, \nu z^m)$ can be interpreted as the differential form 
$\omega z^n + \nu z^m dz$.  Using this interpretation the equation  \eqref{d} comes out as
\begin{equation}\label{dd}
d(\omega z^n + \nu z^m dz) =  d(\omega)z^n + \left( (-1)^{|\omega|} \omega \partial_z(z^n) + d(\nu)z^m \right)dz.
\end{equation}
Furthermore, 
the multiplication in $S\oplus S^{\overline{\sigma}}$ says concretely that, for all $\nu\in\Omega^kR$,
$$ (0,1)(\omega z^n, 0) = (0, \overline{\sigma}(\omega z^n)) =  (0, (-1)^{|\omega|}\sigma(\omega)z^n),
$$
meaning 
\begin{equation}\label{dz.omega}
dz   \omega z^n = (-1)^{|\omega|}\sigma(\omega )  z^n dz = \overline{\sigma}(\omega z^n) dz. 
\end{equation}
The structure of a trivial extension pays tribute to the fact that $\Omega A dz$ is a square-zero ideal of the algebra of differential forms $\Omega A$, hence 
\begin{equation}\label{dz}
dz   dz =0.
\end{equation}
The equations \eqref{dz.omega} and \eqref{dz} determine fully the algebra structure of $\Omega A$. 

We need to check that the map $d$ defined by \eqref{dd}  satisfies the graded Leibniz rule. Let us take any homogeneous $\omega, \nu \in \Omega R$ and compute
\begin{eqnarray*}
d(\omega z^n    \nu z^m) &=& d(\omega\sigma^n(\nu) z^{n+m})\\
&=& d(\omega\sigma^n(\nu))z^{n+m} + (-1)^{|\omega|+|\nu|} \omega \sigma^n(\nu)\partial_z(z^{n+m}) dz\\
&=& d(\omega)\sigma^{n}(\nu)z^{n+m} + (-1)^{|\omega|}\omega \sigma^n(d(\nu))z^{n+m} \\
&&  +  (-1)^{|\omega|+|\nu|} \omega \sigma^n(\nu) (\partial_z(z^n)z^m + z^n\partial_z(z^m)) dz\\
&=& d(\omega)z^{n}\nu z^{m} + (-1)^{|\omega|}\omega z^{n} d(\nu)z^m \\
&&  +  (-1)^{|\omega|+|\nu|} (\omega \partial_z(z^n) \sigma(\nu) z^m + \omega z^n \nu \partial_z(z^m)) dz\\
&=& \left( d(\omega)z^{n}+  (-1)^{|\omega|} \omega \partial_z(z^n)dz \right) \nu z^m  \\
&&  + (-1)^{|\omega|}\omega z^{n} \left( d(\nu)z^m  +  (-1)^{|\nu|}  \nu \partial_z(z^m) dz\right)\\
&=& d(\omega z^n)\nu z^m + (-1)^{|\omega|} \omega z^n d(\nu z^m),
\end{eqnarray*}
where we used the definition of multiplication of the skew-polynomial algebra, \eqref{dd} and the fact that both $d$ on $\Omega R$ and $\partial_z$ satisfy the (graded) Leibniz rule. Next
\begin{eqnarray*}
d(\omega z^n dz \:\: \nu z^ m) &=& (-1)^{|\nu|}d(\omega \sigma^{n+1}(\nu) z^{n+m} dz )\\
&=& (-1)^{|\nu|}  d(\omega \sigma^{n+1}(\nu))z^{n+m} dz\\
&=& (-1)^{|\nu|} [ d(\omega)\sigma^{n+1}(\nu) + (-1)^{|\omega|}\omega d(\sigma^{n+1}(\nu))] z^{n+m}dz \\
&=& d(\omega z^n) \overline{\sigma}(\nu z^m) dz  + (-1)^{|\omega|} \omega z^n \overline{\sigma}(d(\nu z^m)) dz \\
&=& d(\omega z^n dz) \:\: \nu z^m  + (-1)^{|\omega|} \omega z^n dz\:\: d(\nu z^ m) ,
\end{eqnarray*}
by \eqref{dz.omega} and \eqref{dd}. Finally we can compute: 
\begin{eqnarray*}
d(\omega z^n \:\: \nu z^ m dz) &=& d(\omega \sigma^{n}(\nu) z^{n+m} dz )\\
&=&   d(\omega \sigma^{n}(\nu))z^{n+m} dz\\
&=&  [d(\omega)\sigma^{n}(\nu) + (-1)^{|\omega|}\omega \sigma^{n}(d(\nu))] z^{n+m}dz \\
&=& d(\omega z^n) \nu z^m dz  + (-1)^{|\omega|} \omega z^n d(\nu z^m dz).
\end{eqnarray*}
This proves that $(\Omega A, d)$ is a differential graded algebra. It is clear that $\Omega^{N+1}A = \Omega ^NR[z^\bullet] dz \neq 0$ and there are no components $\Omega^n A$ if $n>N+1$, hence $(\Omega A, d)$ has dimension $N+1$. Since every element of $(\Omega A, d)$ can be written as a linear combination of $\omega z^n + \nu z^m dz$, with $\omega,\nu \in \Omega R$ and $\Omega R$ satisfies the density condition (over $R$), also $\Omega A$ satisfies this condition (over $A$). Therefore, $(\Omega A, d)$ is an $N+1$-dimensional calculus as claimed.
\end{proof}

\begin{lemma}\label{lem.volume}
In the set-up of Lemma~\ref{lem.extension} assume that $(\Omega R, d)$ has a volume $N$-form $\topform$ with the twisting automorphism $\theta_\v$ and the co-ordinate isomorphism $\pi_\v$. 
Let $u = \pi_\v (\sigma(\v))$ and define the map
\begin{equation}\label{bar.theta}
\bar{\theta} : R[z^\bullet;\sigma] \to R[z^\bullet;\sigma], \qquad \sum_{i}a_iz^i\mapsto \sum_{i} \theta_\v(a_i)(uz)^i.
\end{equation}
Then:
\begin{zlist}
\item The map $\bar{\theta}$ is an algebra automorphism of $R[z^\bullet;\sigma]$.
\item $\Omega \left(R[z^\bullet;\sigma]\right)$ has a volume form $\topform dz$ with the twisting automorphism $\overline{\sigma}^{-1}\circ\bar{\theta}$.
 \end{zlist}
\end{lemma}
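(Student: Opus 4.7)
The plan is to verify (1) and (2) in order. For (1), since $\bar\theta$ is $\FF$-linear by construction, what needs checking is compatibility with the defining relation $zr=\sigma(r)z$ of the skew-polynomial ring and bijectivity. Using $\bar\theta(z)=uz$ and $\bar\theta(r)=\theta_\v(r)$, this compatibility reduces to the identity
$$
u\,\sigma(\theta_\v(r)) \;=\; \theta_\v(\sigma(r))\,u, \qquad r\in R,
$$
which I will derive by applying $\pi_\v$ to the two equivalent forms of $\sigma(r\v)$: $\sigma(r)\sigma(\v)=\sigma(r)\v u$ on the one hand, and $\sigma(\v\,\theta_\v(r))=\v\,u\,\sigma(\theta_\v(r))$ on the other. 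Bijectivity of $\bar\theta$ will then follow since $\theta_\v$ is an automorphism and $u$ is invertible in $R$, the latter because $\sigma$ restricts to an automorphism of the free rank-one right $R$-module $\Omega^N R$: writing $\sigma^{-1}(\v)=\v u'$ yields $u\,\sigma(u')=1=u'\,\sigma^{-1}(u)$.

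For (2), I will first identify the right co-ordinate isomorphism associated to $\v dz$. Specialising the commutation rule \eqref{dz.omega} to $b\in R$ gives $dz\cdot bz^n = \sigma(b)z^n dz$, so $(\v dz)\cdot a = \v\,\sigma(b)z^n dz$ whenever $a=bz^n$. Since $\Omega^{N+1}A = \Omega^N R[z^\bullet]\,dz$ and $\Omega^N R=\v R$, this shows that $\v dz$ freely generates $\Omega^{N+1}A$ as a right $A$-module, with $\pi_{\v dz}(\v c z^n dz)=\sigma^{-1}(c)z^n$. Next, I will compute $\theta_{\v dz}(rz^n)=\pi_{\v dz}(rz^n\cdot \v dz)$ by moving $rz^n$ past $\v$ using $r\v=\v\,\theta_\v(r)$ together with the iterated relation $z^n\v=\sigma^n(\v)z^n=\v\,u\sigma(u)\cdots\sigma^{n-1}(u)\,z^n$. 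This yields $\theta_{\v dz}(rz^n) = \sigma^{-1}\!\bigl(\theta_\v(r)\,u\sigma(u)\cdots\sigma^{n-1}(u)\bigr)z^n$, which matches $\overline{\sigma}^{-1}(\bar\theta(rz^n))$ via the direct expansion $\bar\theta(rz^n)=\theta_\v(r)(uz)^n = \theta_\v(r)u\sigma(u)\cdots\sigma^{n-1}(u)z^n$ and the fact that, on the degree-zero subalgebra $A\subset S$, the automorphism $\overline{\sigma}^{-1}$ of \eqref{bar.sigma} acts simply as $\sigma^{-1}$ on the $R$-coefficient and trivially on powers of $z$. Thus $\theta_{\v dz} = \overline{\sigma}^{-1}\circ\bar\theta$.

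To finish (2), I will note that $\overline{\sigma}^{-1}\circ\bar\theta$ is an algebra automorphism of $A$ by part (1) and \eqref{bar.sigma}, so $\theta_{\v dz}$ is bijective; combined with the right-$A$-linear bijection $\pi_{\v dz}$, this forces the left multiplication $a\mapsto a\cdot \v dz$ to be a bijection $A\to \Omega^{N+1}A$, whence $\v dz$ is a free left $A$-module generator as well. The main obstacle I anticipate is the compatibility identity of part (1): all subsequent calculations in part (2) rest on it, and without the careful interplay of $\sigma$, $\theta_\v$ and the unit $u$ the formula $\theta_{\v dz}=\overline{\sigma}^{-1}\circ\bar\theta$ would fail to telescope correctly.
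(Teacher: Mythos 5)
Your proposal is correct and follows essentially the same route as the paper: the key identity $\theta_\v(\sigma(r))u=u\sigma(\theta_\v(r))$ is obtained, as in the paper, by evaluating $\sigma(r\v)$ in two ways against $\sigma(\v)=\v u$, invertibility of $u$ comes from writing $\sigma^{-1}(\v)=\v u'$, and the twisting automorphism $\overline{\sigma}^{-1}\circ\bar\theta$ is identified by pushing $rz^n$ past $\v\,dz$ using $z^n\v=\sigma^n(\v)z^n$ and the rule \eqref{dz.omega}, exactly as in the paper's computation $f\v\,dz=\v\,\bar\theta(f)\,dz=\v\,dz\,\overline{\sigma}^{-1}(\bar\theta(f))$. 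No gaps of substance; only the injectivity of $a\mapsto \v\,dz\cdot a$ (which the paper spells out) is left implicit in your phrase ``freely generates''.
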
 

\begin{proof}
First note that  there exists  $v\in R$ such that $\sigma^{-1}(\topform)=\topform v$. Hence $u\sigma(v) = 1 = v\sigma^{-1}(u)$, i.e. $u$ is invertible and the map $\bar{\theta}$ is well defined also in the Laurent case. Furthermore  $\bar{\theta}$ is invertible,  with the inverse
$$
\bar{\theta}^{-1} : R[z;\sigma] \to R[z;\sigma], \qquad \sum_{i} a_iz^i\mapsto \sum_{i} \theta_\v^{-1}(a_i)(u^{-1}z)^i.
$$

Since, for all $a\in R$,  $a\topform  = \topform \theta_\v(a)$ and
$\sigma(\topform)= \topform \pi_\v (\sigma(\v)) = \topform u$,
$$
\topform\theta_\v(\sigma(a))u = \sigma(a)\topform u = \sigma(a)\sigma(\topform)=\sigma(a\topform)=\sigma(\topform\theta_\v(a))=\sigma(\topform)\sigma(\theta_\v(a))=\topform u \sigma(\theta_\v(a)).
$$
Hence
$$ 
\theta_\v(\sigma(a))u  =  u \sigma(\theta_\v(a))
$$
holds for all $a$ in $R$, and therefore
$$
\bar{\theta}(z) \bar{\theta}(a) = uz\theta_\v(a) = u\sigma(\theta_\v(a)) z =  \theta_\v(\sigma(a))uz = \bar{\theta}(\sigma(a))\bar{\theta}(z),
$$
and
\begin{eqnarray*}
\bar{\theta}(z^{-1}) \bar{\theta}(a) &=& z^{-1}u^{-1}\theta_\v(a) = z^{-1}\sigma(\theta_\v(\sigma^{-1}(a))) u^{-1}\\
& = & \theta_\v(\sigma^{-1}(a))z^{-1}u^{-1} = \bar{\theta}(\sigma^{-1}(a))\bar{\theta}(z^{-1}),
\end{eqnarray*}
This implies that $\bar{\theta}$ is an algebra map and completes the proof of the first assertion.

To prove the second assertion, first let us write $A$ for $R[z^\bullet;\sigma]$. As $\Omega^N R = \topform R$, we have $\left(\Omega^N R\right)[z^\bullet] = \topform A$, and thus
$\Omega^{N+1}A = \left(\Omega^N R\right)[z^\bullet] dz = \topform A dz$, i.e.\ any element of $\Omega^{N+1}A$ is of the form $\topform f dz$, for some $f\in A$.
By \eqref{dz.omega}, 
$\topform f dz =  \topform dz  \overline{\sigma}^{-1}(f)$, hence $\Omega^{N+1} A = \topform dz A$. Let $f=\sum_{} a_i z^i \in A$ be such that $\topform dz f = 0$, then $\topform \overline{\sigma}(f) = 0$, which implies  $f=0$. Thus $\Omega^{N+1}A = \topform dz A$ is a free rank one right $A$-module.
Moreover, for any element $f \in A$,
$$ 
f \topform dz = \topform \bar{\theta}(f)  dz = \topform dz \overline{\sigma}^{-1}(\bar{\theta}(f)),
$$
which also shows that $A\topform dz = \topform dz A$, and hence $\topform dz$  is a free generator on both sides, and that the twisting automorphism has the stated form. 
\end{proof}

\begin{lemma}\label{lem.morphisms}
Let $R\subseteq S$ be a ring extension and $\sigma \in \mathrm{Aut}(S)$ such that the restriction of $\sigma$ to $R$ is an automorphism of $R$. Consider the skew-polynomial rings $S[z^\bullet;\sigma]$ and its subring $R[z^\bullet;\sigma]$.
Let $M$ be a $\sigma$-stable, right $R$-submodule of $S$ and consider the right $R[z^\bullet;\sigma]$-submodule $M[z^\bullet]$ of $S[z^\bullet;\sigma]$.

\begin{enumerate}
	\item The additive map $\psi: \Hom_R(M,R)[z^\bullet]\rightarrow \Hom_{R[z;\sigma]}\left(M[z^\bullet] ,R[z^\bullet;\sigma]\right)$ given by 
	$$ fz^k \mapsto \psi_{fz^k}:[mz^i\mapsto \sigma^k(f(m))z^{k+i}],\qquad \forall f\in \Hom_R(M,R),
	$$ is  well-defined and injective.
	\item If $M$ is a finitely generated right $R$-module and $\sigma(M)=M$, then $\psi$ is bijective.
\end{enumerate}
\end{lemma}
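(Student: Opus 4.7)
The plan is to prove (1) by a direct calculation and (2) by explicitly constructing the inverse of $\psi$. For (1), I would first verify that for each $f\in\Hom_R(M,R)$ and each admissible $k$ the prescription $\psi_{fz^k}(mz^i):=\sigma^k(f(m))z^{k+i}$ gives a well-defined right $R[z^\bullet;\sigma]$-linear map $M[z^\bullet]\to R[z^\bullet;\sigma]$. Additivity in $m$ and $i$ is immediate, and right $R[z^\bullet;\sigma]$-linearity reduces to
$$
\sigma^k\bigl(f(m\,\sigma^i(r))\bigr)\,z^{k+i+j} \;=\; \sigma^k(f(m))\,\sigma^{k+i}(r)\,z^{k+i+j},
$$
which follows from right $R$-linearity of $f$ together with multiplicativity of $\sigma$. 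Extending additively in $k$ produces $\psi$. For injectivity, if $\psi(\sum_k f_k z^k)=0$, then evaluation at $m\in M\equiv Mz^0$ gives $\sum_k \sigma^k(f_k(m))z^k=0$ in $R[z^\bullet;\sigma]$; by linear independence of the $z^k$ and injectivity of $\sigma$, each $f_k(m)$ vanishes, whence $f_k=0$.

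For (2), the strategy is to produce a preimage of an arbitrary $\Phi\in \Hom_{R[z;\sigma]}(M[z^\bullet],R[z^\bullet;\sigma])$ by reading off its coefficients. For each $m\in M$, uniquely expand
$$
\Phi(m) \;=\; \sum_k g_k(m)\,z^k,
$$
thereby defining additive maps $g_k:M\to R$. Right $R$-linearity of $\Phi$ combined with the defining commutation rule $z^k r=\sigma^k(r)z^k$ yields the twisted Leibniz identity $g_k(mr)=g_k(m)\sigma^k(r)$, which by itself is \emph{not} $R$-linearity. To correct this, I would set $f_k:=\sigma^{-k}\circ g_k$, which is meaningful because $\sigma$ restricts to an automorphism of $R$; then $f_k(mr)=\sigma^{-k}(g_k(m)\sigma^k(r))=f_k(m)r$, so $f_k\in\Hom_R(M,R)$. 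A direct computation gives
$$
\psi\!\left(\sum_k f_k z^k\right)\!(mz^i) \;=\; \sum_k \sigma^k(f_k(m))\,z^{k+i} \;=\; \sum_k g_k(m)\,z^{k+i} \;=\; \Phi(m)\,z^i \;=\; \Phi(mz^i),
$$
so the constructed element is indeed a preimage of $\Phi$.

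The main obstacle, and the point at which the standing hypotheses enter, is to ensure that $\sum_k f_k z^k$ actually lies in $\Hom_R(M,R)[z^\bullet]$, i.e.\ that only finitely many $f_k$ are nonzero. This is where finite generation is used: if $m_1,\dots,m_n$ generate $M$ over $R$, then each $\Phi(m_j)$ is a (Laurent) polynomial with some finite support $K_j$, and the twisted Leibniz identity forces $g_k\bigl(\textstyle\sum_j m_j r_j\bigr)=\sum_j g_k(m_j)\sigma^k(r_j)=0$ whenever $k$ lies outside the finite set $K:=\bigcup_j K_j$. The condition $\sigma(M)=M$ is used to guarantee that $M[z^\bullet]$ is genuinely a right $R[z^\bullet;\sigma]$-submodule of $S[z^\bullet;\sigma]$ in the Laurent case, so that negative powers of $z$ are handled symmetrically. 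Apart from this, the only delicate point is careful bookkeeping of the $\sigma$-twist that mediates between the coefficient maps $g_k$ and the honest $R$-linear maps $f_k$.
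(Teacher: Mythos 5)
Your proposal is correct and follows essentially the same route as the paper: part (1) is verified by the same direct computation, and for part (2) your coefficient maps $f_k=\sigma^{-k}\circ g_k$ are exactly the paper's $f_k=p_k\circ f$ with $p_k\bigl(\sum r_iz^i\bigr)=\sigma^{-k}(r_k)$, with finiteness of the support extracted from a finite generating set in the same way. (The only quibble is your side remark on the role of $\sigma(M)=M$: the submodule property of $M[z^\bullet]$ already follows from $M$ being a right $R$-submodule and $\sigma$ restricting to an automorphism of $R$, so this does not affect the argument.)
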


\begin{proof}
	(1) We will first show that $\psi_{fz^k}$ is a right $R[z^\bullet;\sigma]$-linear map. For all $m\in M$, $r\in R$ and $i,j \in \ZZ$,
	\begin{eqnarray*}
	\psi_{fz^k}(mz^i)rz^j &=&	\sigma^k(f(m)) z^{k+i} rz^j \\
	 &=&	\sigma^k(f(m\sigma^{i}(r))) z^{k+i+j}
	 =	\psi_{fz^k}\left(m\sigma^{i}(r)z^{i+j}\right) = \psi_{fz^k}\left(mz^i rz^j\right)
	\end{eqnarray*}
If $\psi_{\sum_{k} f_kz^k}=0$ for some $\sum_{k} f_kz^k \in \Hom_R(M,R)[z^\bullet]$, then, for all $m\in M$,
$$
\sum_{k} \sigma^k\left(f_k\left(m\right)\right) z^k = \psi_{\sum_{k} f_kz^k}(m)=0,
$$ 
i.e.\ $\sigma^k(f_k(m))=0$, for all $ k$. Hence $f_k=0$ for all $k$, showing that $\psi$ is injective.
	
	(2) For any $k$, define an additive map  
$$
p_k:R[z;\sigma]\rightarrow R, \qquad 
p_k\left(\sum r_iz^i\right)=\sigma^{-k}(r_k).
$$
The maps $p_k$ are  right $R$-linear, because
	$$
	p_k\left(\sum r_iz^i r'\right)=p_k\left(\sum r_i\sigma^i(r')z^i\right)=\sigma^{-k}(r_k)r'=p_k\left(\sum r_iz^i\right)r'.
	$$	
	
	Suppose $\{b_1,\ldots, b_n\}$ is a generating set for $M_R$. Let 
	$f\in \Hom_{R[z^\bullet;\sigma]}\left(M[z^\bullet],R[z^\bullet;\sigma]\right)$. There exist a finite indexing set $I\subset \ZZ$ and elements $\beta_{ik} \in R$, for $1\leq i \leq n$ and $k \in I$, such that
	$f(b_i) = \sum_{k\in I} \beta_{ik}z^k.$ For all $k \in I$, define right $R$-linear maps $f_k \in \Hom_R(M,R)$ by the composition
	$f_k = p_k \circ f$, i.e. $f_k(b_i)=\sigma^{-k}(\beta_{ik}),$ for all $i$.
	Let $m\in M$. There are $\lambda_i\in R$ such that $m=\sum_{i=1}^n b_i\lambda_i$. Then, for any $j$,
\begin{eqnarray*}
f(mz^j)&=&\sum_{i=1}^n f(b_i)\lambda_i z^j 
= \sum_{k\in I} \sum_{i=1}^n \beta_{ik}z^k \lambda_i z^j\\
&=& \sum_{k\in I} \sum_{i=1}^n \sigma^k(f_k(b_i)) \sigma^k(\lambda_i )z^{k+j}
= \sum_{k\in I} \sigma^k(f_k(m))z^{k+j} = \psi_{\sum_{k\in I} f_kz^k}\left(mz^j\right).
\end{eqnarray*}
Hence $f= \psi\left( \sum_{k\in I} f_kz^k\right).$
\end{proof}

\begin{corollary}\label{isomorphism}
	Let $\sigma$ be an automorphism of degree $0$ of a graded algebra $\Omega=\bigoplus_{k=0}^\infty \Omega^k$. Set $R=\Omega^0$. If $\Omega^k$ is finitely generated as right $R$-module, then
$$
	\Hom_R(\Omega^k,R)[z^\bullet] \simeq \Hom_{R[z^\bullet;\sigma]}(\Omega^k[z^\bullet],R[z^\bullet;\sigma]).
$$
\end{corollary}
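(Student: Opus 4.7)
The plan is to deduce the statement directly from Lemma \ref{lem.morphisms}, with essentially no new work required. I would set $S := \Omega$ as the ambient ring and take $M := \Omega^k$ as a right $R$-submodule of $S$. Since $\sigma$ has degree $0$, its restriction to $R = \Omega^0$ is an automorphism of $R$, so $R[z^\bullet;\sigma]$ embeds naturally as a subring of $S[z^\bullet;\sigma]$, and the graded piece $\Omega^k$ is $\sigma$-stable (again because $\sigma$ preserves degree). Thus the set-up of Lemma \ref{lem.morphisms} is in force.

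With this identification, part (1) of Lemma \ref{lem.morphisms} produces a well-defined injective additive map
$$
\psi : \Hom_R(\Omega^k,R)[z^\bullet] \longrightarrow \Hom_{R[z^\bullet;\sigma]}\bigl(\Omega^k[z^\bullet],\, R[z^\bullet;\sigma]\bigr),
$$
given by the formula $fz^k \mapsto \bigl[mz^i \mapsto \sigma^k(f(m))z^{k+i}\bigr]$. Part (2) then upgrades $\psi$ to a bijection under the sole additional assumption that $\Omega^k$ is finitely generated as a right $R$-module, which is exactly the hypothesis of the corollary. This immediately yields the asserted isomorphism.

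The only point worth double-checking — and the closest thing to an obstacle — is the compatibility between the two a priori different meanings of $\Omega^k[z^\bullet]$: the abstract direct sum of copies of $\Omega^k$ indexed by $\mathbb{N}$ or $\mathbb{Z}$ introduced before Lemma \ref{lem.extension}, versus the right $R[z^\bullet;\sigma]$-submodule of $S[z^\bullet;\sigma]$ appearing in Lemma \ref{lem.morphisms}. However, both carry the same right $R[z^\bullet;\sigma]$-action, determined unambiguously by the skew-polynomial rule $mz^i \cdot rz^j = m\sigma^i(r)z^{i+j}$, so the two descriptions agree and the corollary follows at once.
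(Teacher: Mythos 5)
Your proof is correct and is exactly the argument the paper intends: the corollary is stated without proof precisely because it is the specialization of Lemma~\ref{lem.morphisms} to $S=\Omega$, $M=\Omega^k$, with $\sigma$-stability (indeed $\sigma(\Omega^k)=\Omega^k$) following from $\sigma$ being a degree-zero automorphism. Your extra remark reconciling the two readings of $\Omega^k[z^\bullet]$ is a sensible sanity check but adds nothing beyond the paper's implicit reasoning.
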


With these assertions at hand we can now prove Theorem~\ref{thm.skew}.

\begin{proofof}{Theorem~\ref{thm.skew}}
Let us denote by $\topform$ a volume form for $\Omega R$, with the corresponding co-ordinate isomorphism $\pi_\v: \Omega^N R \rightarrow R$. 
By Lemma~\ref{lem.volume},  $\topform dz$ is a volume form for the differential calculus  $\Omega A$ on $A=R[z^\bullet, \sigma]$ constructed in Lemma~\ref{lem.extension}, and let  $\pi_{\v dz}: \Omega^{N+1} A \rightarrow A$ be the corresponding co-ordinate isomorphism. For all $\omega \in \Omega^NR$,
\begin{equation}\label{eq_pi_pi} 
\pi_{\v dz}(\omega z^i dz) = \pi_{\v dz}\left(\topform \pi_\v(\omega) dz\right)z^i  = \sigma^{-1}(\pi_\v(\omega)) z^i.\end{equation}
Consider the maps $\ell_\v^k : \Omega^k R \rightarrow \Hom_R(\Omega^{N-k}R, R)$, $\ell_{\v dz}: \Omega^k A \rightarrow \Hom_A(\Omega^{N+1-k}A, A)$  associated to respective volume forms by \eqref{ell}. 
 We   canonically extend $\ell_\v^k$ 
 to maps from $\Omega^kR[z^\bullet]$ to $\Hom_R(\Omega^{N-k}R, R)[z^\bullet]$, by acting on the coefficients.  To show that $A$ is differentially smooth, we need to show that the maps $\ell_{\v dz}^k:\Omega^kA \rightarrow \Hom_{A}\left(\Omega^{N-k}A ,A\right)$ are bijective. For $k=0$ or $k=N+1$ these bijections are clear since $\v dz$ is a volume form for $\Omega A$. Let $1\leq k \leq N$ and recall that $\Omega^k A = (\Omega^k R)[z^\bullet] \oplus (\Omega^{k-1}R)[z^\bullet] dz$.  It is not difficult to see that the image of $(\Omega^k R)[z^\bullet]$ under $\ell_{\v dz}^k$ lies in $ \Hom_A(\left(\Omega^{N-k}R\right)[z^\bullet]dz, A)$,
while the image of $(\Omega^{k-1}R)[z^\bullet] dz$ under $\ell_{\v dz}^k$ lies in  $\Hom_A(\left(\Omega^{N+1-k}R\right)[z^\bullet], A)$. 

For later use we define for any $i\in \ZZ$ invertible elements   $v_i\in R$, such that $\sigma^{i}(\v)=v_i \v$.
Then $v_{i+j}\v = \sigma^{i}(\sigma^{j}(\v))
=\sigma^{i}(v_j)v_i\v$ for any $i,j \in \ZZ$, i.e. $v_{i+j}=\sigma^{i}(v_j)v_i$ from which it follows that $\sigma^{i}(v_{-i})$ is the inverse of $v_i$.
The following equation shows the commutation of $\sigma$ and $\pi_\v$. Let $\omega \in \Omega^N R$. Then
\begin{equation}\label{eq:comm:sigma:pi}
\sigma^{-i}(\pi_\v (\omega)) = \pi_\v (\v \sigma^{-i}(\pi_\v (\omega)))
 = \pi_\v (\sigma^{-i}( v_{i}\v \pi_\v (\omega)))
  = \pi_\v (\sigma^{-i}( v_{i}\omega)).
\end{equation}

Furthermore, the invertible elements $v_i$ give rise to a linear isomorphism 
$$\Phi: (\Omega^{k-1}R)[z^\bullet]dz \rightarrow (\Omega^{k-1}R)[z^\bullet],
\qquad  \omega z^i dz \mapsto (-1)^{N-k+1}\sigma^{-(i+1)} (v_{i+1} \omega )z^i .$$

Using equations  (\ref{eq_pi_pi}) and (\ref{eq:comm:sigma:pi})  we compute, for all $\omega z^i\in \Omega^{k-1}R[z^\bullet]$ and $ \omega' z^j\in \Omega^{N-k+1}R[z^\bullet]$,
\begin{eqnarray*}
\ell_{\v dz}^{k}(\omega z^i dz)(\omega' z^j)&=&
\pi_{\v dz}(\omega z^i dz\omega' z^j) 
=(-1)^{|\omega'|} \pi_{\v dz}(\omega \sigma^{i+1}(\omega') z^{i+j} dz) \\
&=& (-1)^{|\omega'|} \sigma^{-1}(\pi_\v(\omega \sigma^{i+1}(\omega'))) z^{i+j} \\
&=& (-1)^{|\omega'|} \sigma^i(\sigma^{-(i+1)}(\pi_\v(\omega \sigma^{i+1}(\omega')))) z^{i+j} \\
&=& (-1)^{|\omega'|} \sigma^i(\pi_\v(\sigma^{-(i+1)}(v_{i+1}\omega \sigma^{i+1}(\omega')))) z^{i+j} \\
&=& (-1)^{|\omega'|} \sigma^i(\pi_\v(\sigma^{-(i+1)}(v_{i+1}\omega) \omega') z^{i+j} \\
&=& \psi ( \ell_\v^{k-1}(  (-1)^{|\omega'|} \sigma^{-(i+1)}(v_{i+1}\omega z^i) )) (  \omega' z^{j})\\
&=& \psi ( \ell_\v^{k-1}( \Phi( \omega z^i dz))) (  \omega' z^{j}),
\end{eqnarray*}
where $\psi: \Hom_R(\Omega^{N-k+1}R,R)[z^\bullet]\rightarrow \Hom_{A}\left(\Omega^{N-k+1}R[z^\bullet] ,A\right)$ is the homomorphism from Lemma \ref{lem.morphisms}. 
Hence we have shown that the following diagram commutes:
\begin{equation}\label{diagram1}\begin{CD}
 \left(\Omega^{k-1}R\right)[z^\bullet]dz @>{\ell_{\v dz}^k}>>  \Hom_A(\Omega^{N+1-k}R[z^\bullet], A) \\
 @V{\Phi}VV @AA{\psi}A \\
\left(  \Omega^{k-1}R\right)[z^\bullet] @>>{\ell_{\v}^{k-1}}>  \Hom_R(\Omega^{N+1-k}R,R)[z^\bullet].
\end{CD}
\end{equation}

In a way similar to  the definition of $\Phi$ we define the linear isomorphism
$\Phi':  \left(\Omega^{k}R\right)[z^\bullet] \rightarrow  \left(\Omega^{k}R\right)[z^\bullet]$ by
$$
\Phi'(\omega z^i) = \sigma^{-(i+1)}(v_{i+1}\omega) z^i  \qquad  \mbox{ for all } \omega \in \Omega^k R.
$$  
Moreover, the map $\varphi: \Omega^{N-k}R[z^\bullet]dz \rightarrow \Omega^{N-k}R[z^\bullet]$, given by $\varphi(\omega z^i dz) = \sigma^{-1}(\omega)z^i$, is an isomorphism of right $A$-modules. 
The adjoint map of $\varphi$ is  the isomorphism
$$
\varphi^*:  \Hom_A\left(  \Omega^{N-k}R\right)[z^\bullet], A) \rightarrow \Hom_A(\Omega^{N-k}R[z^\bullet]dz, A),
$$
$$
f \mapsto f\circ \varphi : [\omega z^idz \mapsto f(\sigma^{-1}(\omega)z^i)].
$$

Using again equation (\ref{eq:comm:sigma:pi}), $\Phi'$ and $\varphi^*$ we compute, for all $\omega z^i \in \Omega^{k} R[z^\bullet], \omega' z^j\in \Omega^{N-k} R[z^\bullet]$,
\begin{eqnarray*}
\ell_{\v dz}^{k}(\omega z^i)(\omega' z^j dz)&=&
\pi_{\v dz}(\omega z^i \omega' z^j dz) 
=\pi_{\v dz}(\omega\sigma^i(\omega') z^{i+j} dz)  \\
&=& \sigma^{-1}(\pi_\v(\omega\sigma^i(\omega'))) z^{i+j} 
= \sigma^i(\sigma^{-(i+1)}(\pi_\v(\omega\sigma^i(\omega')))) z^{i+j} \\
&=& \sigma^i(\pi_\v(\sigma^{-(i+1)}(v_{i+1}\omega\sigma^i(\omega'))) z^{i+j} \\
&=& \sigma^i(\pi_\v(\sigma^{-(i+1)}(v_{i+1}\omega)\sigma^{-1}(\omega'))) z^{i+j} \\
&=& \psi(\ell_\v^k( \sigma^{-(i+1)}(v_{i+1}\omega) z^i)) (\sigma^{-1}(\omega')z^j)\\
&=& \psi(\ell_\v^k( \Phi'(\omega z^i))) (\varphi( \omega'z^j dz))
= \varphi^*(\psi(\ell_\v^k( \Phi'(\omega z^i)))) ( \omega'z^j dz).
\end{eqnarray*}
Therefore, the following diagram commutes:
\begin{equation}\label{diagram2}\begin{CD}
 \left(\Omega^{k}R\right)[z^\bullet] @>{\ell_{\v dz}^k}>>  \Hom_A(\Omega^{N-k}R[z^\bullet]dz, A) \\
 @V{\Phi'}VV @AA{\varphi^*\circ\psi}A \\
 \left(\Omega^{k}R\right)[z^\bullet] @>>{\ell_{\v}^k}>  \Hom_R(\Omega^{N-k}R,R)[z^\bullet] . 
\end{CD}\end{equation}

Assume for all $k$, that the maps $\ell_{\v}^k$ are bijective and that the right $R$-modules $\Omega^k R$ are finitely generated. Then $\psi$ is bijective by Corollary \ref{isomorphism} and hence $\psi\circ\ell_{\v}^{k-1}\circ\Phi$ is bijective as well as $\varphi^*\circ\psi\circ \ell_{\v}^k\circ \Phi'$ are bijective maps. Since the diagrams (\ref{diagram1}) and (\ref{diagram2}) commute, also $\ell_{\v dz}^k:\Omega^k A \rightarrow  \Hom_A(\Omega^{N+1-k}A, A) $ is bijective.

Finally, if $\Omega R$ is a connected calculus then vanishing of the first component in \eqref{d}, implies that if $d(f) =0$, for $f =\sum_i a_iz^i \in R[z^\bullet;\sigma]$, then  $f\in \FF[z^\bullet]$ (i.e.\ $f$ has scalar coefficients only). The  second component in \eqref{d} is simply $\partial_z (f)$, hence it vanishes if and only if $f$ is a scalar multiple of the identity in $A$. Therefore, the calculus $\Omega A$ is also connected. This completes the proof of the theorem.
\end{proofof}

\begin{example}\label{ex.flip}
For any non-zero $q\in \FF$, let us define $A_q$ as an algebra generated by $x,y,z$ and relations
\begin{equation}\label{flip}
xy=yx, \qquad xz=q zy, \qquad yz = zx.
\end{equation}
The algebra $A_q$ is differentially smooth. Similarly, the algebra $B_q$, generated by  $x$, $y$ and invertible $z$  subject to relations \eqref{flip}, is differentially smooth.
\end{example}
\begin{proof}
The algebras $A_q$ and $B_q$ are both
skew-polynomial rings, $A_q = \FF[x,y][z;\sigma]$, $B_q = \FF[x,y][z^{\pm 1};\sigma]$, where the automorphism $\sigma$ of $\FF[x,y]$ is given by
$$
\sigma(x) = y, \qquad \sigma(y) = q x.
$$
The polynomial algebra $\FF[x,y]$ is differentially smooth with the usual commutative differential calculus $\Omega(\FF[x,y])$,
$$
xdx = dxx, \qquad xdy = dyx, \qquad ydx = dxy, \qquad ydy=dyy, 
$$
$$
 dxdy = - dydx, \qquad (dx)^2 = (dy)^2 =0.
$$
The automorphism $\sigma$ extends to the automorphism of $\Omega(\FF[x,y])$ by requesting  it commute with $d$, i.e.\
$$
\sigma(dx) =  dy, \qquad \sigma(dy) =  qdx.
$$
Since $\Omega(\FF[x,y])$ is finitely generated as a right $\FF[x,y]$-module and
$$
\gk(A_q) = \gk(B_q) = 3 = \gk(\FF[x,y]) +1,
$$
Theorem~\ref{thm.skew} yields the differential smoothness of $A_q$ and $B_q$. 
\end{proof}

\begin{remark}\label{rem.flip}
We notice in passing that $B_1$ in Example~\ref{ex.flip} contains the down-up algebra $A(0,1,0)$ \cite{BenRob:dow} as a proper subalgebra and hence the assertion of Example~\ref{ex.flip} can be a starting point in determining whether $A(0,1,0)$  is differentially smooth.
\end{remark}

The statement of Theorem~\ref{thm.skew} can be iterated in the following way.

\begin{proposition}\label{prop.iteration}
Let $R$ be an algebra with an integrable differential calculus $(\Omega R,d)$ such that $\Omega R$ is a finitely generated right $R$-module. Let $\sigma$ be an automorphism of $R$ that extends to a degree-preserving automorphism of $\Omega R$, which commutes with $d$. Let $(\Omega A, d)$  be the integrable differential calculus on  $A=R[z^\bullet;\sigma]$ constructed via Theorem~\ref{thm.skew}. 
\begin{zlist}
\item   For any $q\in \FF^*$, the map $\sigma$ extends to an automorphism of  the differential graded algebra $(\Omega A, d)$, by
$$
\sigma_q: \Omega A \to \Omega A, \qquad \omega f(z) \mapsto \sigma(\omega) f(qz), \quad  \omega f(z)dz  \mapsto q\sigma(\omega) f(qz)dz.
$$
\item 
If $R$ is differentially smooth with respect to $(\Omega R,d)$ and $\gk(A)=\gk(R)+1$, then $A[t^\bullet;\sigma_q] = R[z^\bullet;\sigma][t^\bullet;\sigma_q]$ is also differentially smooth.
\end{zlist}
\end{proposition}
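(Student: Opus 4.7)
The plan is to establish part (1) by a direct verification of the defining relations of $\Omega A$ from Lemma~\ref{lem.extension}, and then derive part (2) by applying Theorem~\ref{thm.skew} a second time with $(R,\sigma,\Omega R)$ replaced by $(A,\sigma_q,\Omega A)$.

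For part (1), I would first observe that $\sigma_q$ is $\FF$-linear and degree-preserving by construction. To show it is multiplicative, it suffices to check the generating relations of $\Omega A$: the skew-polynomial rule $z\omega = \sigma(\omega)z$ for $\omega \in \Omega R$, which holds under $\sigma_q$ since $\sigma_q(z)\sigma_q(\omega)=qz\sigma(\omega)=q\sigma^2(\omega)z=\sigma_q(\sigma(\omega)z)$; the commutation \eqref{dz.omega}, under which both sides are rescaled by the same factor $q^{n+1}$; and $(dz)^2=0$, which is automatic. Commutativity with $d$ I would verify on the two types of homogeneous elements $\omega f(z)$ and $\omega f(z)dz$, using $\sigma\circ d=d\circ\sigma$ on $\Omega R$ together with the elementary identity $\partial_z f(qz)=q f'(qz)$. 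Invertibility of $\sigma_q$ is then clear, with inverse given by the analogous formula using $\sigma^{-1}$ and $q^{-1}$.

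For part (2), the idea is to feed $(A,\sigma_q,\Omega A)$ back into Theorem~\ref{thm.skew}. Most of its hypotheses are automatic: $(\Omega A,d)$ is an integrable connected $(\gk(R)+1)$-dimensional calculus on $A$ by the first application of Theorem~\ref{thm.skew}; $\Omega A$ is finitely generated as a right $A$-module, being generated by any finite right $R$-generating set of $\Omega R$ together with $dz$; and part (1) supplies the required degree-preserving, $d$-commuting extension of $\sigma_q$ to $\Omega A$. Theorem~\ref{thm.skew} will then yield differential smoothness of $A[t^\bullet;\sigma_q]$ as soon as we know that $\gk(A[t^\bullet;\sigma_q])=\gk(A)+1$.

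The main obstacle is precisely this Gelfand-Kirillov identity. I would deduce it from \cite[Proposition~1]{LerMat:Gel} by arguing that $\sigma_q$ is locally algebraic on $A$. The local algebraicity of $\sigma$ on $R$ that underlies the hypothesis $\gk(A)=\gk(R)+1$ ensures that each coefficient $r_i$ in a finite sum $a=\sum_i r_iz^i\in A$ lies in some finite-dimensional $\sigma$-stable subspace $V_i\subseteq R$; then $\sigma_q^n(a)=\sum_i \sigma^n(r_i)q^{ni}z^i$ is contained in the finite-dimensional subspace $\bigoplus_i V_iz^i$ of $A$, independently of $n$. Local algebraicity of $\sigma_q$ on $A$ follows, and the proposition is established.
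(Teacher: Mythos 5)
Your proposal follows essentially the same route as the paper: part (1) by direct verification of the algebra relations and of commutation with $d$ via the identity $\partial_z\circ\sigma_q=q\,\sigma_q\circ\partial_z$, and part (2) by feeding $(A,\sigma_q,\Omega A)$ back into Theorem~\ref{thm.skew}, with finite generation of $\Omega A$ over $A$ and local algebraicity of $\sigma_q$ supplying $\gk(A[t^\bullet;\sigma_q])=\gk(A)+1$ via \cite[Proposition~1]{LerMat:Gel}. The one caveat --- that local algebraicity of $\sigma$ on $R$ is a sufficient condition for, not a literal consequence of, the stated hypothesis $\gk(A)=\gk(R)+1$ --- is shared with the paper, which asserts local algebraicity of $\sigma_q$ without comment, so your treatment is if anything slightly more explicit on this point.
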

\begin{proof}
That $\sigma_q$ is an algebra automorphism is established by a routine calculation. To check that $\sigma_q$ commutes with $d$, first observe that 
\begin{equation}\label{q-com}
\partial_z \circ \sigma_q = q\,\sigma_q\circ \partial_z.
\end{equation}
Hence, for all $f,g\in \FF[z^\bullet]$ and homogeneous $\omega, \nu \in \Omega R$,
\begin{eqnarray*}
\sigma_q \left(d \left(\omega f +\nu g dz\right)\right) &=& \sigma(d\omega)\sigma_q(f) + q\left((-1)^{|\omega|}\sigma(\omega)\sigma_q(\partial_z(f)) +\sigma(d\nu)\sigma_q(g)\right)dz\\
&=& d\left(\sigma(\omega)\right)\sigma_q(f) + (-1)^{|\omega|}\sigma(\omega)\partial_z(\sigma_q(f))dz +d\left(\sigma(\nu)\right)\sigma_q(g dz)\\
&=& d\left(\sigma_q \left(\omega f +\nu g dz\right) \right),
\end{eqnarray*}
where the second equality follows by \eqref{q-com} and the fact that $\sigma$ commutes with $d$. This completes the proof of the first statement. 

Since $\Omega R$ is finitely generated as a right $R$ module and 
$$
\Omega^k A = \Omega^kR[z^\bullet;\sigma]\oplus \Omega^{k-1}R[z^\bullet;\sigma]^{\bar\sigma},
$$
also $\Omega A$ is finitely generated as a right $A$-module.  $\Omega A$ is integrable of dimension $\gk(R) +1$, hence, by Theorem~\ref{thm.skew} $A[t^\bullet;\sigma_q]$ admits an integrable calculus of dimension $\gk(R) + 2$. Furthermore, the automorphism $\sigma_q$ is locally algebraic, hence $\gk(A[t^\bullet;\sigma_q]) = \gk(A) +1$ by \cite[Proposition~1]{LerMat:Gel}, and since $\gk(A)=\gk(R)+1$ the second assertion follows.
\end{proof}

Proposition~\ref{prop.iteration} leads to a quick proof of the differential smoothness of special cases of algebras whose differential smoothness was established in \cite{KarLom:int}.

\begin{corollary}\label{cor.affine}
The coordinate algebra of the non-commutative $n$-dimensional affine space, i.e.\ the algebra $\FF_{\bf q}[x_1,\ldots, x_n]$ generated by $x_1,\ldots, x_n$ subject to the relations
$$
x_ix_j = q_ix_jx_i, \qquad \mbox{for all $i<j$}, 
$$
where $\mathbf{q} = (q_1, \ldots , q_{n-1})\in (\FF^*)^{n-1}$, is differentially smooth.
\end{corollary}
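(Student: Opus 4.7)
The plan is to iterate Proposition~\ref{prop.iteration} along the natural Ore extension presentation of $\FF_{\mathbf{q}}[x_1,\ldots,x_n]$. For $1 \le k \le n$, set $R_k := \FF_{\mathbf{q}}[x_1,\ldots,x_k]$ and let $\tau_k \in \Aut R_{k-1}$ be the diagonal automorphism $x_i \mapsto q_i^{-1} x_i$ for $i < k$ (with $\tau_1 = \mathrm{id}_{\FF}$). The relations $x_i x_j = q_i x_j x_i$ give the iterated skew-polynomial presentation $R_k = R_{k-1}[x_k; \tau_k]$, with $R_n = \FF_{\mathbf{q}}[x_1,\ldots,x_n]$. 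I will prove by induction on $k$ the stronger statement that $R_k$ is differentially smooth via a connected integrable calculus $\Omega R_k$ which is finitely generated as a right $R_k$-module, and that $\tau_{k+1}$ extends to a degree-preserving DG-automorphism of $\Omega R_k$ commuting with $d$.

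The base case $k=1$ follows from Corollary~\ref{cor.poly}: the standard calculus $\Omega\FF[x_1]$ is freely generated by $dx_1$ on either side, and $\tau_2$ extends uniquely as $x_1 \mapsto q_1^{-1} x_1$, $dx_1 \mapsto q_1^{-1} dx_1$, which commutes with $d$ by construction.

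For the inductive step, I would apply Proposition~\ref{prop.iteration}(ii) with $R = R_{k-1}$, $\sigma = \tau_k$, $A = R_k$, and $q = q_k^{-1}$. The key observation is that the automorphism $\sigma_q$ produced in Proposition~\ref{prop.iteration}(i) agrees with $\tau_k$ on $R_{k-1}$ and scales $x_k$ by $q_k^{-1}$, so its restriction to $R_k$ is precisely $\tau_{k+1}$; consequently $A[t^\bullet; \sigma_q] = R_k[x_{k+1}; \tau_{k+1}] = R_{k+1}$. The required Gelfand--Kirillov equality $\gk(R_k) = \gk(R_{k-1}) + 1$ is automatic, since each $\tau_k$ is a diagonal scaling and hence locally algebraic, so \cite[Proposition~1]{LerMat:Gel} gives $\gk(R_k) = k$ for every $k$. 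Therefore $R_{k+1}$ is differentially smooth, and the calculus $\Omega R_{k+1}$ produced by Theorem~\ref{thm.skew} inherits finite generation as a right module from $\Omega R_k$; Proposition~\ref{prop.iteration}(i), applied one level up, then furnishes the DG-extension of $\tau_{k+2}$ needed to continue the induction.

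I do not expect a real technical obstacle here: the entire content of the argument is the bookkeeping identification of the concrete diagonal automorphism $\tau_{k+1}$ with the canonical form $\sigma_q$ delivered by Proposition~\ref{prop.iteration}(i), so that the differential smoothness conclusions chain together. The delicate analytic points—integrability of the calculus and freeness of the volume form at each stage—are already encapsulated in Theorem~\ref{thm.skew}, while local algebraicity of the diagonal scalings handles the Gelfand--Kirillov dimension hypothesis uniformly throughout the iteration.
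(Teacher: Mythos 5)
Your proof is correct and takes essentially the same route as the paper's: the paper likewise presents $\FF_{\bf q}[x_1,\ldots,x_n]$ as an iterated skew-polynomial ring starting from $\FF[x_1]$ and applies Theorem~\ref{thm.skew} and then Proposition~\ref{prop.iteration} repeatedly, with a different $q$ at each step. Your version merely spells out what the paper leaves implicit, namely the identification of the diagonal automorphisms $\tau_{k+1}$ with the canonical $\sigma_q$ (for $q=q_k^{-1}$) and the local-algebraicity argument securing the Gelfand--Kirillov dimension hypothesis.
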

\begin{proof}
$\FF_{\bf q}[x_1,\ldots, x_n]$ is an iterated skew polynomial ring. Starting with the polynomial ring $\FF[x_1]$, which is differentially smooth by the usual commutative differential structure, and applying first Theorem~\ref{thm.skew} and then its iteration Proposition~\ref{prop.iteration} sufficiently many times (with a different $q$ at each step), we conclude that $\FF_{\bf q}[x_1,\ldots, x_n]$ is differentially smooth, as claimed.
\end{proof}

 \section*{Acknowledgments}
This work was done during a visit of the second author to Swansea University in the framework of his sabbatical leave. He would like to thank the first author and all members of the Department of Mathematics for their warm hospitality. Moreover, the second author thankfully acknowledges the financial support of the College of Science Research Fund of Swansea University and the award of the LMS visitor grant (scheme 2), Ref. No. 21501.

\end{document}